\documentclass[11pt]{elsarticle}
\usepackage{amssymb,amsfonts,amsmath,mathrsfs,mathtools}
\usepackage{graphicx,epsfig,subfigure}
\usepackage{bm}
\usepackage[margin=1in,papersize={8.5in,11in}]{geometry}
\usepackage{times}
\usepackage{multirow}
\usepackage{color}

\def\vs{\vspace{0.2cm}}

\newtheorem{lemma}{\bf Lemma}[section]

\newtheorem{theorem}{\bf Theorem}[section]

\newtheorem{proposition}{\bf Proposition}[section]

\newenvironment{proof}{{\noindent \bf \em Proof:}}{\hfill$\square$}

\title{Dynamic tensor approximation of high-dimensional nonlinear PDEs}

\begin{document}
\begin{frontmatter}

\author[ucsc]{Alec Dektor}
\author[ucsc]{Daniele Venturi\corref{correspondingAuthor}}
\ead{venturi@ucsc.edu}

\address[ucsc]{Department of Applied Mathematics, University of California Santa Cruz\\ Santa Cruz (CA) 95064}

\cortext[correspondingAuthor]{Corresponding author}

\journal{ArXiv}

\begin{abstract}
We present a new method based on functional tensor 
decomposition and dynamic tensor approximation to
compute the solution of a high-dimensional time-dependent 
nonlinear partial differential equation (PDE).
The idea of dynamic approximation is to project the 
time derivative of the PDE solution onto the 
tangent space of a low-rank functional tensor manifold 
at each time. Such a projection can be computed
by minimizing a convex energy functional over the 
tangent space. This minimization problem yields the
unique optimal velocity vector that allows us to 
integrate the PDE forward in time on a tensor 
manifold of constant rank. 
In the case of initial/boundary value problems defined in 
real separable Hilbert spaces, this procedure yields evolution
equations for the tensor modes in the form of a 
coupled system of one-dimensional time-dependent PDEs.
We apply the dynamic tensor approximation to a 
four-dimensional Fokker--Planck equation with non-constant drift
and diffusion coefficients, and demonstrate its 
accuracy in predicting relaxation to statistical equilibrium.
\end{abstract}
\end{frontmatter}

\section{Introduction}
High-dimensional partial differential equations (PDEs) arise 
in many areas of engineering, physical sciences and 
mathematics.  Classical examples are equations involving 
probability density functions (PDFs) such as 
the Fokker--Planck equation \cite{Risken}, the 
Liouville equation \cite{Venturi_PRS,HeyrimJCP_2014}, or 
the Boltzmann equation \cite{cercignani1988,dimarco2014,TensorBGK}.  
More recently, high-dimensional PDEs 
have also become central to many new areas of 
application such optimal mass transport \cite{Osher2019,Villani}, 
random dynamical systems \cite{Venturi_MZ,Venturi_PRS}, 
mean field games \cite{Weinan2019,Ruthotto2020}, and 
functional-differential equations 
\cite{VenturiSpectral,venturi2018numerical}.

Computing the solution to high-dimensional PDEs is a 
challenging problem that requires approximating 
high-dimensional functions, i.e., the solution to 
the PDE, and then developing appropriate numerical 
schemes to compute such functions accurately.
Classical numerical methods based on tensor product 
representations are not viable in high-dimensions, 
as the number of degrees of freedom grows 
exponentially fast with the dimension. 
To address this problem there have been substantial 
research efforts in recent years on 
approximation theory for high-dimensional systems.
Techniques such as sparse collocation 
\cite{Bungartz,Chkifa,Barthelmann,Foo1,Akil}, 
high-dimensional model representations
\cite{Li1,CaoCG09,Baldeaux},  
deep neural networks \cite{Raissi,Raissi1,Zhu2019} 
and tensor methods 
\cite{khoromskij,Bachmayr,Rodgers_2020,parr_tensor,
Hackbusch_book,Kolda} 
were proposed to mitigate the exponential growth of 
the degrees of freedom, the computational cost and 
memory requirements. In recent work \cite{Dektor_2020}, 
we proposed a new method for solving 
high-dimensional time-dependent PDEs based on 
dynamically orthogonal tensor series expansions. 
The key idea is to represent the solution
in terms of a hierarchy of Schmidt 
decompositions and then enforce dynamic 
orthogonality constraints on the tensor modes. 
In the case of initial/boundary value problems for PDEs 
defined in separable geometries, this procedure yields evolution 
equations for the dynamic tensor modes in the form 
of a coupled system of one-dimensional 
time-dependent PDEs. 

In this paper, we develop an extension of this approach 
based on the functional tensor train (FTT) expansion 
recently proposed by Bigoni, Engsig--Karup and Marzouk
in \cite{Bigoni_2016}. In particular, we prove that FTT, 
combined with the set of hierarchical dynamic orthogonality 
constraints we introduced in \cite{Dektor_2020}, 
defines the {\em best dynamic approximation} 
of the solution to a nonlinear PDE on a smooth tensor 
manifold with constant rank. To describe what we 
mean by best dynamic approximation, 
consider the autonomous PDE
\begin{align}
\frac{\partial u(x,t) }{\partial t} = N(u(x,t)), 
\qquad u(x,0) = u_0(x),
\label{nonlinear-ibvp0} 
\end{align}
where $u:  \Omega \times [0,T] \to\mathbb{R}$ 
is a $d$-dimensional (time-dependent) scalar field defined in 
the domain $\Omega\subseteq \mathbb{R}^d$ 
and $N$ is a nonlinear operator which may 
depend on the spatial variables, and may 
incorporate boundary conditions.
Suppose that at some fixed time $t\in [0,T]$ 
the solution $u(x,t)$ belongs to a smooth 
manifold $\mathcal{M}$ embedded in a 
real Hilbert space $H$.
The best dynamic approximation aims at approximating 
$u(x,t)$ at a later time with a point 
lying on the manifold $\mathcal{M}$ by determining 
the optimal vector in the tangent plane of $\mathcal{M}$
that best approximates $\partial u(x,t)/\partial t$. 
This is achieved by solving the variational problem
\begin{equation}
\label{dyn_approx}
\min_{v(x,t) \in \mathcal{T}_{u(x,t)} \mathcal{M}} \left\| v(x,t) - \frac{\partial u(x,t)}{\partial t} \right\|_H= \min_{v(x,t) \in \mathcal{T}_{u(x,t)} \mathcal{M}} \left\| v(x,t) - N(u(x,t)) \right\|_H. 
\end{equation}
Such an approximation is an infinite-dimensional analogue of 
the dynamical low--rank approximation on Euclidean 
manifolds considered by Lubich {\em et al.} for matrices 
\cite{Lubich_2007,Lubich_2008}, 
Tucker tensors \cite{Lubich_2010}, and 
hierarchical tensors \cite{Lubich_2013,Lubich_2015}.

This paper is organized as follows. 
In section \ref{sec:function_decomp} we briefly 
review the hierarchical Schmidt decomposition 
of multivariate functions (FTT format) and address its 
effective computation. 
In section \ref{sec:manifold_of_tensors} we prove 
that the set of constant-rank FTT tensors is a 
smooth Hilbert manifold, which therefore admits a 
tangent plane at each point. This result generalizes 
\cite[Theorem 4]{h_tucker_geom} to tensor 
manifolds in infinite dimensions.  
In section \ref{sec:optimal_integration_theory} 
we parameterize the tangent space of the Hilbert 
manifold and derive a system of partial differential 
equations for the FTT cores corresponding to a given PDE. 
This system is shown to be the projection of the time derivative 
of the PDE solution onto the tangent space of the 
tensor manifold.
In Section \ref{sec:numerics} we provide a numerical 
demonstration of the dynamic functional tensor train 
approximation for a four-dimensional 
Fokker--Planck equation with non-constant 
drift and diffusion coefficients. 
Finally, the main findings are summarized in 
section \ref{sec:summary}.

\section{Functional tensor train (FTT) decomposition in real separable Hilbert spaces}
\label{sec:function_decomp}
Let $\Omega \subseteq \mathbb{R}^d$ be a 
Cartesian product of $d$ real intervals $\Omega_i=[a_i,b_i]$
\begin{equation}
\begin{aligned}
\Omega &= \bigtimes_{i=1}^d \Omega_i, 
\end{aligned}
\label{Omega}
\end{equation}
$\mu$ a finite product measure on 
$\Omega$
\begin{equation}
\mu(x) = \prod_{i=1}^d \mu_i(x_i),
\end{equation}
and 
\begin{equation}
\label{hilbert_space}
H = L^2_{\mu}(\Omega)
\end{equation}
the standard weighted Hilbert space\footnote{Note that 
the Hilbert space $H$ in equation \eqref{hilbert_space} 
can be equivalently chosen to be a Sobolev space $W^{2,p}$
(see \cite{Dektor_2020} for details).}
of square--integrable functions on $\Omega$. 
In this section we briefly review the functional tensor 
train decomposition  \cite{Bigoni_2016,Gorodetsky2019} 
of a multivariate function $u\in H$
in the setting of hierarchical bi-orthogonal 
series expansions 
\cite{aubry_1,aubry_2,venturi2006,venturi_bi_orthogonal}.
To this end, let $\Omega = \Omega_x \times \Omega_y$, 
$\mu = \mu_x \times \mu_y$  
and $u(x,y) \in L^2_{\mu}(\Omega)$. The operator 
\begin{equation}
\begin{aligned}
T : L^2_{\mu_y}(\Omega_y) &\to L^2_{\mu_x}(\Omega_x) \\
g &\mapsto \int_{\Omega_y} u(x,y) g(y) d\mu_y(y)
\end{aligned}
\label{T}
\end{equation}
is linear, bounded, and compact since $u$ is a 
Hilbert-Schmidt kernel. 
The formal adjoint operator 
of $T$ is given by 
\begin{equation}
\begin{aligned}
T^{\ast} : L^2_{\mu_x}(\Omega_x) &
\to L^2_{\mu_y}(\Omega_y) \\
h &\mapsto \int_{\Omega_x} u(x,y) h(x) d\mu_x(x).
\end{aligned}
\label{T1}
\end{equation}
The composition operator $TT^{\ast}  : L^2_{\mu_x}(\Omega_x) 
\to L^2_{\mu_x}(\Omega_x)$ 
is a self-adjoint compact Hermitian operator. 
The spectrum of $TT^{\ast}$, denoted as 
$\sigma(TT^{\ast})=\{\lambda_1,\lambda_2,\ldots\}$, 
is countable with one accumulation point at $0$, and satisfies 
\begin{equation}
\sum_{i=1}^{\infty} \lambda_i < \infty.
\end{equation} 
The normalized eigenfunction of $TT^{\ast}$ corresponding 
to $\lambda_i$, denoted by $\psi_i(x)$ is an element of 
$L^2_{\mu_x}(\Omega_x)$. The set  
$\{\psi_i\}_{i=1}^{\infty}$ is an 
orthonormal basis of $L^2_{\mu_x}(\Omega_x)$. 
The operator $T^{\ast}T : L^2_{\mu_y}(\Omega_y) 
\to L^2_{\mu_y}(\Omega_y)$ is also 
self-adjoint, compact, and Hermitian, and shares the same 
spectrum as $TT^{\ast}$, i.e., 
$\sigma(TT^{\ast})=  \sigma(T^{\ast}T)$. 
Its eigenfunctions $\{\varphi_i(y)\}_{i=1}^{\infty}$ 
form an orthonormal basis of $L^2_{\mu_y}(\Omega_y)$.
It is a classical result in functional analysis 
that $u(x,y)$ can be expanded as 
(see \cite{Griebel2019,aubry_1,aubry_2})
\begin{equation}
\label{schmidt_decomp}
u(x,y) = \sum_{i=1}^{\infty} \sqrt{\lambda_{i}} \psi_i(x) 
\varphi_i(y).
\end{equation}
The functional tensor train (FTT) decomposition 
recently proposed in \cite{Bigoni_2016} 
can be developed in the setting of hierarchical 
bi-orthogonal expansions as follows.  Let $u\in H$ and set 
$\Omega_x = \Omega_1$ and 
$\Omega_y = \Omega_2 \times \cdots \times \Omega_d$ 
in \eqref{T}-\eqref{T1} to obtain 
\begin{equation}
\label{FTT_level_1}
u(x) = \sum_{\alpha_1=1}^{\infty} \sqrt{\lambda_1(\alpha_1)} 
\psi_1(x_1;\alpha_1) \varphi_1(\alpha_1;x_2,\ldots,x_d).
\end{equation}
Now we let $\Omega_x = \mathbb{N} \times \Omega_2$ and 
$\Omega_y = \Omega_3 \times \cdots \times \Omega_d$ 
and $\tau$ the counting measure on $\mathbb{N}$. 
From the orthonormality 
of $\{\varphi_1(\alpha_1,\cdot)\}_{\alpha_1=1}^{\infty}$ and 
the fact that $u \in L^2_{\mu}(\Omega)$ we have 
\begin{equation}
\begin{aligned}
&\int_{\Omega_x \times \Omega_y} |\sqrt{\lambda_1(\alpha_1)} 
\varphi_1(\alpha_1;x_2,\ldots,x_d)|^2 d\tau(\alpha_1) 
d\mu_2(x_2) \cdots d\mu_d(x_d) \\
&= \sum_{\alpha_1=1}^{\infty} \lambda_1(\alpha_1) 
\int_{\Omega_2 \times \cdots \Omega_d} |
\varphi_1(\alpha_1;x_2,\ldots,x_d)|^2 d\mu_2(x_2) \cdots 
d\mu_d(x_d) \\
&= \sum_{\alpha_1=1}^{\infty} \lambda_1(\alpha_1) < \infty,	
\end{aligned}
\end{equation}
i.e., $(\sqrt{\lambda_1}\varphi_1) \in 
L^2_{\tau \times \mu_2 \times \cdots \times \mu_d}
(X \times Y)$.
Moreover,  $\varphi_1(\alpha_1; x_2,\ldots,x_d)$ can be 
decomposed further by using an expansion of the form \eqref{schmidt_decomp}, i.e.,  
\begin{equation}
\sqrt{\lambda_1(\alpha_1)}\varphi_1(\alpha_1; x_2,\ldots,x_d) = \sum_{\alpha_2=1}^{\infty} 
\sqrt{\lambda_2(\alpha_2)} \psi_2(\alpha_1;x_2;\alpha_2) \varphi_2(\alpha_2; x_3, \ldots, x_d).
\end{equation}
Substituting this expression into \eqref{FTT_level_1} yields
\begin{equation}
u(x) = \sum_{\alpha_1=1}^{\infty}\sum_{\alpha_2=1}^{\infty} 
\sqrt{\lambda_2}(\alpha_2)\psi_1(x_1;\alpha_1) \psi_2(\alpha_1;x_2;\alpha_2) 
\varphi_2(\alpha_2;x_3,\ldots,x_d).
\label{FFT_level_2}
\end{equation}
Proceeding recursively in this manner yields the following FTT expansion 
\begin{equation}
\label{FTT}
u( x) = \sum_{\alpha_1,\ldots,\alpha_{d-1}=1}^{\infty}
\psi_1(\alpha_0;x_1;\alpha_1) \psi_2(\alpha_1;x_2;\alpha_2) \cdots 
\psi_d(\alpha_{d-1};x_d;\alpha_d),
\end{equation}
where $\alpha_0 = \alpha_d = 1$ and $\psi_d(\alpha_{d-1};x_d;\alpha_d) := \sqrt{\lambda_{d-1}(\alpha_{d-1})} \varphi_d(\alpha_{d-1};x_d)$.
By truncating the expansion \eqref{FTT} 
such that the largest singular values 
are retained we obtain
\begin{equation}
\label{FTT_finite}
u_{{TT}}(x) = \sum_{\alpha_0,\ldots,\alpha_{d}=1}^{r}
\psi_1(\alpha_0;x_1;\alpha_1) \psi_2(\alpha_1;x_2;\alpha_2) \cdots 
\psi_d(\alpha_{d-1};x_d;\alpha_d),
\end{equation}
where $r = (1, r_1, \ldots, r_{d-1}, 1)$ is the TT-rank (or rank if 
the TT format is clear from context). 

It is known that the truncated FTT expansion converges optimally with respect to the $L^2_{\mu}(\Omega)$ norm \cite{Bigoni_2016}. More precisely, for any given 
function $u\in L^2_{\mu}(\Omega)$ the FTT approximant \eqref{FTT_finite} minimizes the residual $R_{\text{TT}} = \| u - u_{{TT}} \|_{L^2_{\mu}(\Omega)}$ 
relative to independent variations of the functions 
$\{\psi_i(\alpha_{i-1};x_i;\alpha_{i})\}$ on a tensor 
manifold with constant rank $r$. 
It is convenient to write \eqref{FTT_finite} in a more 
compact form as 
\begin{equation}
u_{{TT}}(x) = \Psi_1(x_1) \Psi_2(x_2) \cdots \Psi_d(x_d),
\end{equation}
where $\Psi_i(x_i)$ is a $r_{i-1} \times r_i$ matrix with entries 
$\left[\Psi_i(x_i)\right]_{jk} = \psi_i(j;x_i;k)$.
The matrix-valued functions $\Psi_i(x_i)$ will be referred 
to as FTT cores. The spatial dependency is clear from 
the subscript of the core so we will often 
suppress the explicit dependence on the spatial variable $x_i$
to simply write $\Psi_i=\Psi_i(x_i)$ and $\psi_i(\alpha_{i-1},\alpha_i)=\psi_i(\alpha_{i-1};x_i;\alpha_i)$.
Rank $r$ FTT decompositions can be computed 
at quadrature points by first discretizing $u$ on a 
tensor product grid and then using a tensor product 
quadrature rule together with known algorithms for 
computing a discrete TT decomposition of a full tensor 
as discussed in \citep{Bigoni_2016}.

At this point we summarize the 
main differences between the FTT series expansion \eqref{FTT_finite} and the series expansions we 
recently developed in \cite{Dektor_2020}. With reference to 
the first level of the hierarchical TT decomposition, i.e., Eq. \eqref{FTT_level_1}, we notice that in the FTT setting 
the functions $\varphi_i(\alpha_i;x_i,\ldots,x_d)$
are not decomposed independently (for each $\alpha_i=1,2,\ldots$) 
as in \cite{Dektor_2020}. Instead, only one bi-orthogonal decomposition is performed on the average 
\begin{equation}
\overline{\varphi}_i(x_{i+1},\ldots,x_d)= \sum_{\alpha_i=1}^{\infty} \varphi_i(\alpha_i; x_{i+1},\ldots,x_d).
\label{averagemode}
\end{equation}
This follows naturally from the assumption 
$\varphi_i(\alpha_i,x_{i+1},\ldots,x_d) \in L^2_{\tau \times \mu_{i+1} \times \cdots \times \mu_d}(\mathbb{N} \times \Omega_{i+1} \times \cdots \times \Omega_d)$, which includes a counting 
measure $\tau$ that yields the summation in \eqref{averagemode}
as part of the inner product. On the other hand, the hierarchical expansion we studied in \cite{Dektor_2020} treats  
$\varphi_i(\alpha_i;x_{i+1},\ldots,x_d)$ as an 
element of $L^2_{\mu_{i+1} \times \cdots \times \mu_d}(\Omega_{i+1} \times \cdots \Omega_d)$ for each $\alpha_i = 1,2,\ldots$ . Hence,  a bi-orthogonal decomposition is performed on $\varphi_i(\alpha_i,x_{i+1},\ldots,x_d)$ 
for each $\alpha_i$. 
Obviously, such decomposition requires many more 
computations but offers more information about 
the spectrum of the multivariate function 
at each level of the TT binary tree. Hereafter we 
proceed by considering the FTT decomposition 
\eqref{FTT_finite}, but note that similar theoretical 
results can also be developed for the hierarchical 
series expansions we studied in \cite{Dektor_2020}.

\section{The manifold of constant rank FTT tensors}
\label{sec:manifold_of_tensors}

In this section we prove that the space of 
constant rank FTT tensors is a smooth manifold, 
which therefore admits a tangent plane at each point. 
The tangent plane will be used in section \ref{sec:optimal_integration_theory} to 
develop an integration theory based on dynamic tensor 
approximation for time-dependent 
nonlinear PDEs. To prove that the space of constant 
rank FTT tensors is a smooth manifold, we follow a 
similar construction as presented in 
\cite{double_do,double_do_stokes}. Closely 
related work was presented in 
\cite{Grassmann_quantum} in relation 
to Slater--type variational spaces in 
many particle Hartree--Fock theory.
Also, the discrete analogues of the infinite-dimensional 
tensor manifolds discussed hereafter were studied 
in detail in \cite{h_tucker_geom,Holtz_2012}. 

Let $\Psi, \tilde{\Psi} \in M_{r_1 \times r_2}
(L_{\mu}^2(\Omega))$, where 
$ M_{r_1 \times r_2}(L_{\mu}^2(\Omega))$ denotes the set of 
$r_1 \times r_2$ matrices with entries in 
$L_{\mu}^2(\Omega)$. Define the matrix 
\begin{equation}
C_{\Psi,\tilde{\Psi}} = \left\langle \Psi^T, \tilde{\Psi} \right\rangle_{L^2_{\mu}(\Omega)} \in M_{r_2 \times r_2}(\mathbb{R}) 
\label{Cdef}
\end{equation}
with entries\footnote{In equations \eqref{Cdef} and 
\eqref{Cdef1} $\langle\cdot,\cdot\rangle_{L^2_\mu(\Omega)}$
denotes the standard inner product in $L^2_\mu(\Omega)$.}
\begin{align}
\left[C_{\Psi,\tilde{\Psi}} \right]_{ij} = 
\sum_{k=1}^{r_1} \left\langle \psi(k;x;i), \tilde{\psi}(k;x;j) \right\rangle_{L^2_{\mu}(\Omega)}.
\label{Cdef1}
\end{align}
Denote by $V_{r_{i-1} \times r_i}^{(i)}$ the set of all 
$\Psi_i \in M_{r_{i-1} \times r_{i}}(L^2_{\mu_i}(\Omega_i))$ with the property that $C_{\Psi_i,\Psi_i}$ is 
invertible.
We are interested in the following subset of $L^2_{\mu}(\Omega)$ consisting of rank-$r$ FTT 
tensors in $d$ dimensions  
\begin{equation}
\begin{aligned}
\mathfrak{T}_{r}^{(d)} &= \{ u \in L^2_{\mu}(\Omega):\quad u 
= \Psi_1 \Psi_2 \cdots \Psi_d , \quad 
\Psi_i \in V_{r_{i-1} \times r_i}^{(i)}, \quad \forall i = 1,2,\ldots,d
\}.
\end{aligned}
\end{equation}
The set 
\begin{equation}
V = V_{r_0 \times r_1}^{(1)} \times 
V_{r_1 \times r_2}^{(2)} \times \cdots 
\times V_{r_{d-1} \times r_d}^{(d)}
\label{V}
\end{equation} 
can be interpreted as a latent space 
for $\mathfrak{T}_{r}^{(d)}$ via 
the mapping 
\begin{equation}
\label{TT_submersion}
\begin{aligned}
\pi : V &\to \mathfrak{T}_{r}^{(d)} \qquad \pi(\Psi_1,\Psi_2,\ldots,\Psi_d) = \Psi_1 \Psi_2 \cdots \Psi_d.
\end{aligned}
\end{equation}
Any tensor $u \in \mathfrak{T}_{r}^{(d)}$ has many representations in $V$, that is the map $\pi(\cdot)$ 
is not injective. The purpose of the following Lemma \ref{lemma:invertible_cov_mats_gen}
and Proposition \ref{lemma:two_dd_decomps} is 
to characterize all elements of the space $V$ 
which have the same image under $\pi$. 

\begin{lemma}
\label{lemma:invertible_cov_mats_gen}
If $\{\psi(\alpha_i;x;\alpha_j)\}_{\alpha_j=1}^{r},\{\tilde{\psi}(\alpha_i;x;\alpha_j)\}_{\alpha_j=1}^r$ are two bases for the same finite 
dimensional subspace of $L^2_{\tau \times \mu}(\mathbb{N} \times \Omega)$ 
then the matrix $C_{\Psi,\tilde{\Psi}}$ defined in \eqref{Cdef} 
is invertible.
\end{lemma}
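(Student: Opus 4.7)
The plan is to interpret the matrix $C_{\Psi,\tilde{\Psi}}$ as a cross-Gram matrix between the two bases, and then use standard linear algebra to conclude invertibility from linear independence.

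First, I would observe that the summation over $k$ in the definition \eqref{Cdef1} combined with the integration in the $L^2_\mu(\Omega)$ inner product is nothing but the inner product on $L^2_{\tau \times \mu}(\mathbb{N} \times \Omega)$. Thus
\[
\left[C_{\Psi,\tilde{\Psi}}\right]_{ij} = \bigl\langle \psi(\,\cdot\,;\,\cdot\,;i),\,\tilde{\psi}(\,\cdot\,;\,\cdot\,;j)\bigr\rangle_{L^2_{\tau \times \mu}(\mathbb{N}\times\Omega)},
\]
which exhibits $C_{\Psi,\tilde\Psi}$ as the cross-Gram matrix of the two bases of the common $r$-dimensional subspace, call it $S \subseteq L^2_{\tau \times \mu}(\mathbb{N}\times\Omega)$.

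Next I would show invertibility by proving that the kernel of $C_{\Psi,\tilde\Psi}$ is trivial. Suppose $v \in \mathbb{R}^r$ satisfies $C_{\Psi,\tilde\Psi}\,v = 0$. Define the element
\[
w = \sum_{j=1}^{r} v_j\,\tilde{\psi}(\,\cdot\,;\,\cdot\,;j) \in S.
\]
The assumption $C_{\Psi,\tilde\Psi}\,v = 0$ says that $\langle \psi(\,\cdot\,;\,\cdot\,;i),\,w\rangle_{L^2_{\tau\times\mu}} = 0$ for every $i = 1,\ldots,r$. Since $\{\psi(\,\cdot\,;\,\cdot\,;i)\}_{i=1}^{r}$ is also a basis of $S$ and $w \in S$, orthogonality to every basis vector forces $w = 0$. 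Linear independence of $\{\tilde{\psi}(\,\cdot\,;\,\cdot\,;j)\}_{j=1}^{r}$ then yields $v = 0$, so $C_{\Psi,\tilde\Psi}$ is invertible.

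There is no serious obstacle here: the only subtlety is bookkeeping the discrete index $\alpha_i$ as part of the measure space. Once the identification of $[C_{\Psi,\tilde\Psi}]_{ij}$ with an $L^2_{\tau\times\mu}$ inner product is in place, the result is equivalent to the elementary fact that the cross-Gram matrix of two bases of a finite-dimensional inner-product subspace is invertible (alternatively, one may write $\tilde{\psi}(\,\cdot\,;\,\cdot\,;j) = \sum_\ell A_{\ell j}\psi(\,\cdot\,;\,\cdot\,;\ell)$ for an invertible change-of-basis $A \in GL_r(\mathbb{R})$, so that $C_{\Psi,\tilde\Psi} = G A$ with $G$ the positive-definite Gram matrix of $\{\psi(\,\cdot\,;\,\cdot\,;i)\}$, making $C_{\Psi,\tilde\Psi}$ a product of invertible matrices).
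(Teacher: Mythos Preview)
Your argument is correct and is essentially the same as the paper's: both show $C_{\Psi,\tilde\Psi}v=0$ forces the element $w=\sum_j v_j\,\tilde\psi(\cdot;\cdot;j)\in S$ to be $L^2_{\tau\times\mu}$-orthogonal to every $\psi(\cdot;\cdot;p)$, hence $w=0$ and $v=0$. Your presentation is slightly cleaner in that it makes the cross-Gram interpretation explicit and adds the alternative $C_{\Psi,\tilde\Psi}=GA$ factorization, but the substance is identical.
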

\begin{proof}
The matrix under consideration is given by 
\begin{equation}
C_{\Psi,\tilde{\Psi}} = 
\begin{bmatrix}
\displaystyle\sum_{k=1}^r \left\langle \psi(k;x;1), \tilde{\psi}(k;x;1) \right\rangle_{L^2_{\mu}(\Omega)} & \cdots & 
\displaystyle\sum_{k=1}^r \left\langle \psi(k;x;1), \tilde{\psi}(k;x;r) \right\rangle_{L^2_{\mu}(\Omega)} \\
\vdots & \ddots & \vdots \\
\displaystyle\sum_{k=1}^r \left\langle \psi(k;x;r), \tilde{\psi}(k;x;1) \right\rangle_{L^2_{\mu}(\Omega)} & \cdots & 
\displaystyle\sum_{k=1}^r \left\langle \psi(k;x;r), \tilde{\psi}(k;x;r) \right\rangle_{L^2_{\mu}(\Omega)}
\end{bmatrix}.
\end{equation}
We will show that the columns of this matrix are linearly 
independent. To this end, consider the linear equation  
\begin{equation}
\sum_{i=1}^r v_i \begin{bmatrix}
\displaystyle\sum_{k=1}^r \left\langle \psi(k;x;1), \tilde{\psi}(k;x;i) \right\rangle_{L^2_{\mu}(\Omega)} \\
\vdots \\
\displaystyle\sum_{k=1}^r \left\langle \psi(k;x;r), \tilde{\psi}(k;x;i) \right\rangle_{L^2_{\mu}(\Omega)}
\end{bmatrix} = 0 \qquad v_i \in \mathbb{R}
\end{equation} 
the $p$-th row of which reads 
\begin{equation}
\label{orthogonality_of_two_bases}
\displaystyle\sum_{k=1}^r \left\langle \psi(k;x;p), \sum_{i=1}^r v_i \tilde{\psi}(k;x;i) \right\rangle_{L_{\mu}^2(\Omega)} = 0  \qquad p = 1, \ldots, r.
\end{equation}
If not all the $v_i$ are equal to zero then \eqref{orthogonality_of_two_bases} implies that 
$\displaystyle\sum_{i=1}^r v_i \tilde{\psi}(k;x;i)$ is 
orthogonal to $\psi(k;x;p)$ in 
$L^2_{\tau \times \mu}(\mathbb{N} \times \Omega)$ 
and therefore linearly independent 
for all $p = 1,\ldots, r$. This contradicts 
the assumption that $\{\psi(i,j)\}_{j=1}^{r},\{\tilde{\psi}(i,j)\}_{j=1}^r$ 
span the same finite dimensional subspace of 
$L^2_{\tau \times \mu}(\mathbb{N} \times \Omega)$. Hence $v_i$ are zero for every $i=1,\ldots,r$.

\end{proof}
\begin{proposition}
\label{lemma:two_dd_decomps}
Let $\{\Psi_i\}_{i=1}^d$, $\{\tilde{\Psi}_i\}_{i=1}^d$ be elements of $V$. Then 
\begin{equation}
\pi(\Psi_1,\ldots,\Psi_d)=\pi(\tilde{\Psi}_1,\ldots,\tilde{\Psi}_d) 
\label{pi}
\end{equation}
if and only if there exist matrices $P_i \in \text{GL}_{r_i \times r_i}(\mathbb{R})$ ($i = 0,1,\ldots,d$) 
such that $\Psi_i = P_{i-1}^{-1} \tilde{\Psi}_i P_i$ 
with $P_0, P_d = 1$.
\end{proposition}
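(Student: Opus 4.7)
The ``if'' direction is a short computation: substituting $\Psi_i = P_{i-1}^{-1} \tilde{\Psi}_i P_i$ into the product $\Psi_1 \Psi_2 \cdots \Psi_d$ causes the consecutive pairs $P_i P_i^{-1}$ to telescope, and the boundary conditions $P_0 = P_d = 1$ recover $\tilde{\Psi}_1 \cdots \tilde{\Psi}_d$. I would dispatch this first. The substance of the proposition is the converse implication, which I would prove by induction on $d$.

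For the base case $d=2$, I identify the linear span in $L^2_{\mu_1}(\Omega_1)$ of the $r_1$ entries of $\Psi_1$ with the range of the Hilbert--Schmidt operator $T_u: g \mapsto \int_{\Omega_2} u(x_1,x_2) g(x_2)\,d\mu_2(x_2)$, and likewise for $\tilde{\Psi}_1$. Invertibility of $C_{\Psi_1,\Psi_1}$ and $C_{\Psi_2,\Psi_2}$ forces this range to have dimension exactly $r_1$, so the entries of $\Psi_1$ and those of $\tilde{\Psi}_1$ are two bases for the same $r_1$-dimensional subspace. Lemma \ref{lemma:invertible_cov_mats_gen} then supplies an invertible change-of-basis matrix $P_1$ with $\Psi_1 = \tilde{\Psi}_1 P_1$. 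Substituting back into $\Psi_1 \Psi_2 = \tilde{\Psi}_1 \tilde{\Psi}_2$ and using linear independence of the entries of $\tilde{\Psi}_1$ yields $\Psi_2 = P_1^{-1} \tilde{\Psi}_2$, which is the claim with $P_0 = P_2 = 1$.

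For the inductive step from $d$ to $d+1$ factors, I collapse the first two cores into a single ``super-core'' $B_1 := \Psi_1 \Psi_2$ of size $1 \times r_2$ (with analogous $\tilde{B}_1$) and view the equation as a product of $d$ factors $B_1 \Psi_3 \cdots \Psi_{d+1} = \tilde{B}_1 \tilde{\Psi}_3 \cdots \tilde{\Psi}_{d+1}$. The inductive hypothesis supplies invertible matrices $\hat{P}_1, \ldots, \hat{P}_{d-1}$ with $B_1 = \tilde{B}_1 \hat{P}_1$ and $\Psi_{i+1} = \hat{P}_{i-1}^{-1} \tilde{\Psi}_{i+1} \hat{P}_i$ for $2 \leq i \leq d$. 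A second application of the $d=2$ argument to $\Psi_1 \Psi_2 = \tilde{\Psi}_1 (\tilde{\Psi}_2 \hat{P}_1)$ then splits off $P_1$, giving $\Psi_1 = \tilde{\Psi}_1 P_1$ and $\Psi_2 = P_1^{-1} \tilde{\Psi}_2 \hat{P}_1$. Setting $P_{i+1} := \hat{P}_i$ for $1 \leq i \leq d-1$ together with $P_0 = P_{d+1} = 1$ assembles the full sequence of gauge matrices.

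The main obstacle I anticipate is showing that all intermediate objects remain in the admissible space $V$, so that Lemma \ref{lemma:invertible_cov_mats_gen} and the inductive hypothesis may be invoked. This reduces to two auxiliary facts. First, invertibility of $C_{\Psi,\Psi}$ must be preserved under left multiplication of $\Psi$ by an invertible matrix and must transform as $C_{\Psi B,\Psi B} = B^T C_{\Psi,\Psi} B$ under right multiplication, which controls $\tilde{\Psi}_2 \hat{P}_1$ and $P_1^{-1} \tilde{\Psi}_2$. Second, the $r_2$ entries of the merged super-core $B_1 = \Psi_1 \Psi_2$ must be linearly independent in $L^2_{\mu_1 \times \mu_2}$; this does \emph{not} follow directly from the invertibility of the individual $C_{\Psi_i,\Psi_i}$, but can be established by a short separation-of-variables argument exploiting that the entries of $\Psi_1$ are linearly independent in $x_1$ and the columns of $\Psi_2$ are linearly independent in the sense of $C_{\Psi_2,\Psi_2}$.
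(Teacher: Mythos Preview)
Your approach is correct and takes a genuinely different route from the paper's. The paper also proceeds by induction on $d$ but peels from the \emph{right}: it multiplies $\Psi_1 \cdots \Psi_d = \tilde{\Psi}_1 \cdots \tilde{\Psi}_d$ on the right by $\Psi_d^T$ and integrates over $\Omega_d$ to obtain the explicit reduction $\Psi_1 \cdots \Psi_{d-1} = \tilde{\Psi}_1 \cdots \tilde{\Psi}_{d-1}\, C_{\tilde{\Psi}_d^T,\Psi_d^T}\, C_{\Psi_d^T,\Psi_d^T}^{-1}$, then applies the inductive hypothesis to this $(d{-}1)$-factor identity and sets $P_{d-1} = C_{\tilde{\Psi}_d^T,\Psi_d^T} C_{\Psi_d^T,\Psi_d^T}^{-1}$. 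The base case is handled by the same algebraic manipulation rather than via the range of $T_u$. The paper's argument therefore produces closed-form expressions for the gauge matrices in terms of the correlation matrices $C$, which is convenient computationally; your merge-and-split strategy is more modular, reusing the $d=2$ argument as a black box, and it makes the admissibility verifications you flag at the end more explicit. One caveat worth noting: in your base case you assert that invertibility of $C_{\Psi_1,\Psi_1}$ and $C_{\Psi_2,\Psi_2}$ forces $\mathrm{range}(T_u)$ to have dimension exactly $r_1$; since $r_2=1$ there, $C_{\Psi_2,\Psi_2}$ is scalar and what you actually need is invertibility of the $r_1\times r_1$ row-Gram matrix $C_{\Psi_2^T,\Psi_2^T}$. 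The paper's proof invokes the same inverse (it writes $C_{\Psi_d^T,\Psi_d^T}^{-1}$) without comment, so this is a shared tacit non-degeneracy assumption rather than a flaw specific to your method.
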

\begin{proof}
To prove the forward implication we proceed 
by induction on $d$. For $d = 2$ we have that  
\begin{equation}
\label{2d_decomp}
\Psi_1 \Psi_2 = \tilde{\Psi}_1 \tilde{\Psi}_2 
\end{equation}
implies 
\begin{equation}
\Psi_1 = \tilde{\Psi}_1 C_{\tilde{\Psi}_2^T,\Psi_2^T} C_{\Psi_2^T,\Psi_2^T}^{-1}.
\end{equation}
Set $P_1 = C_{\tilde{\Psi}_2^T,\Psi_2^T} C_{\Psi_2^T,\Psi_2^T}^{-1}$
which is invertible since it is a change of basis matrix. 
Substituting $\Psi_1 = \tilde{\Psi}_1 P_1$ into 
\eqref{2d_decomp} we see that 
\begin{equation}
\tilde{\Psi}_1 P_1 \Psi_2 = \tilde{\Psi}_1 \tilde{\Psi}_2 
\end{equation}
which implies 
\begin{equation}
\Psi_2 = P_1^{-1} \tilde{\Psi}_2.
\end{equation}
This proves the proposition for $d =2$. 
Suppose that the proposition holds true for 
$d-1$ and that 
\begin{equation}
\label{dd_decomp}
\Psi_1 \cdots \Psi_d 
= \tilde{\Psi}_1 \cdots \tilde{\Psi}_d.
\end{equation}
Then,
\begin{equation}
\Psi_1 \cdots \Psi_{d-1} = \tilde{\Psi}_1 \cdots \tilde{\Psi}_{d-1}
C_{\tilde{\Psi}_d^T,\Psi_d^T} C_{\Psi_d^T,\Psi_d^T}^{-1},
\end{equation}
and we are gauranteed the existence of invertible 
matrices $P_1, \ldots, P_{d-2}$ such that 
\begin{equation}
\begin{aligned}
\Psi_1 &= \tilde{\Psi}_1 P_1, \\
& \hspace{0.2cm}\vdots \\
\Psi_{d-2} &= P_{d-3}^{-1} \tilde{\Psi}_{d-2} P_{d-2}, \\
\Psi_{d-1} &= P_{d-2}^{-1} \tilde{\Psi}_{d-1} C_{\tilde{\Psi}_d^T,\Psi_d^T} C_{\Psi_d^T,\Psi_d^T}^{-1}.
\end{aligned}
\label{Eq}
\end{equation}
Let $P_{d-1} =  C_{\tilde{\Psi}_d^T,\Psi_d^T} C_{\Psi_d^T,\Psi_d^T}^{-1}$. Substituting equation \eqref{Eq} 
into \eqref{dd_decomp} yields 
\begin{equation}
\begin{aligned}
\tilde{\Psi}_1 \cdots \tilde{\Psi}_{d-2} \tilde{\Psi}_{d-1} P_{d-1} \Psi_d &= 
\tilde{\Psi}_1 \cdots \tilde{\Psi}_d, \\
P_{d-1} \Psi_d &= \tilde{\Psi}_d,
\end{aligned}
\end{equation}
from which it follows that $P_{d-1}$ is invertible and 
\begin{equation}
\Psi_{d} = P_{d-1}^{-1} \tilde{\Psi}_d.
\end{equation}
This completes the proof.

\end{proof}

\vs
\noindent
With Proposition \ref{lemma:two_dd_decomps} in 
mind we define the group\footnote{In equation \eqref{G}
$\text{GL}_{r_1\times r_i}(\mathbb{R})$ 
denotes the general linear group of $r_i\times r_i$ 
invertible matrices with real entries, together with the operation of 
ordinary matrix multiplication.}
\begin{equation}
\label{G}
G = \text{GL}_{r_1 \times r_1}(\mathbb{R}) \times \text{GL}_{r_2 \times r_2}(\mathbb{R}) \times \cdots \times \text{GL}_{r_{d-1} \times r_{d-1}}(\mathbb{R}) 
\end{equation} 
with group operation given by component-wise 
matrix multiplication. Let $G$ act on $V$ by 
\begin{equation}
(P_1,\ldots,P_{d-1}) \cdot (\Psi_1,\ldots,\Psi_d) = (\Psi_1 P_1, P_1^{-1} \Psi_2 P_2, \ldots, P_{d-1}^{-1} \Psi_d ) 
\end{equation}
for all $(P_1,\ldots,P_{d-1}) \in G$ and $(\Psi_1,\ldots,\Psi_d) \in V$. 
It is easy to see that this is action is free and transitive 
making $G$, $V$, $\mathfrak{T}_r^{(d)}$ and 
$\pi$ a principal $G$-bundle \cite{Rudolph2017}.
In particular $V/G$ is 
isomorphic to $\mathfrak{T}_{r}^{(d)}$ which 
allows us to equip $\mathfrak{T}_{r}^{(d)}$ 
with a manifold structure. Thus, we can define its 
tangent space $\mathcal{T}_u\mathfrak{T}_{r}^{(d)}$ 
at a point $u \in \mathfrak{T}_{r}^{(d)}$. We 
characterize such tangent space as the equivalence 
classes of velocities of smooth curves passing 
through the point $u$   
\begin{equation}
\label{FTT_dd_tangent_space}
\mathcal{T}_u\mathfrak{T}_{r}^{(d)} = \left\{\gamma'(s)\vert_{s=0}: \quad \gamma \in \mathcal{C}^1\left( (-\delta,\delta) , \mathfrak{T}_{r}^{(d)} \right), \quad  
\gamma(0) = u \right\}.
\end{equation}
Here $\mathcal{C}^1\left( (-\delta,\delta) , \mathfrak{T}_{r}^{(d)} \right)$ is the space of continuously 
differentiable functions from the interval 
$(-\delta, \delta)$ to 
the space of constant rank FTT tensors 
$\mathfrak{T}_{r}^{(d)}$.
We conclude this section with the following Lemma which 
singles out a particular representation 
of $u \in \mathfrak{T}_{r}^{(d)}$ for which 
the matrices $C_{\Psi_i,\Psi_i}$ 
(see Eqs. \eqref{Cdef}-\eqref{Cdef1}) are diagonal.
\begin{lemma}
\label{lemma:orthogonal_FTT_cores}
Given any FTT tensor $u \in \mathfrak{T}_{r}^{(d)}$ 
there exist $\Psi_i \in V_{r_{i-1} \times r_i}^{(i)}$ $(i = 1, 2, \ldots, d)$ such that $u=\Psi_1 \Psi_2 \cdots \Psi_d$ and $C_{\Psi_i,\Psi_i} = I_{r_i \times r_i }$ for 
all  $i = 1,\ldots , d-1$. 
\end{lemma}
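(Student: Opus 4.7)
The plan is a left-to-right orthogonalization sweep, analogous to the standard left-orthogonalization of discrete tensor trains. The main tool is the group action of Proposition \ref{lemma:two_dd_decomps}, which at each step lets me modify a pair of adjacent cores $(\Psi_i,\Psi_{i+1})$ without changing the tensor $u$, while forcing the covariance matrix $C_{\Psi_i,\Psi_i}$ to become the identity.

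Starting from any representation $u=\Psi_1\cdots\Psi_d$ with $\Psi_i\in V_{r_{i-1}\times r_i}^{(i)}$, I would first observe that each matrix $C_{\Psi_i,\Psi_i}$ is symmetric positive definite. Indeed the identity $v^T C_{\Psi_i,\Psi_i}v=\sum_{k=1}^{r_{i-1}}\bigl\|\sum_{p=1}^{r_i}v_p\,\psi_i(k;x_i;p)\bigr\|_{L^2_{\mu_i}(\Omega_i)}^2$ exhibits positive semi-definiteness, and invertibility (the defining property of $V_{r_{i-1}\times r_i}^{(i)}$) upgrades it to positive definiteness. Consequently there exists an invertible $S_i\in\text{GL}_{r_i\times r_i}(\mathbb{R})$ with $S_i^T S_i=C_{\Psi_i,\Psi_i}$; a Cholesky factor is a convenient choice.

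Proceeding by induction on $i=1,\ldots,d-1$, I would apply the group element $(I,\ldots,I,S_i^{-1},I,\ldots,I)\in G$ (with $S_i^{-1}$ in the $i$-th slot) to the current tuple of cores. By the definition of the action this replaces $(\Psi_i,\Psi_{i+1})$ with $(\hat\Psi_i,\hat\Psi_{i+1}):=(\Psi_i S_i^{-1},\,S_i\Psi_{i+1})$, leaves every other core untouched (in particular the previously orthogonalized ones $\hat\Psi_1,\ldots,\hat\Psi_{i-1}$), and preserves the tensor $u$ by Proposition \ref{lemma:two_dd_decomps}. A direct computation yields $C_{\hat\Psi_i,\hat\Psi_i}=S_i^{-T}C_{\Psi_i,\Psi_i}S_i^{-1}=I$, so the $i$-th core is now orthogonal in the required sense.

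The one delicate point is verifying that the modified $\hat\Psi_{i+1}$ still lies in $V_{r_i\times r_{i+1}}^{(i+1)}$, so that $S_{i+1}$ is well defined at the next step. I would argue by contradiction: a null vector $v$ of $C_{\hat\Psi_{i+1},\hat\Psi_{i+1}}$ leads to $\sum_m(S_i)_{km}\bigl[\sum_p v_p\,\psi_{i+1}(m;x_{i+1};p)\bigr]=0$ for every $k$, and invertibility of $S_i$ forces $\sum_p v_p\,\psi_{i+1}(m;x_{i+1};p)=0$ for every $m$, contradicting invertibility of $C_{\Psi_{i+1},\Psi_{i+1}}$. This is the principal obstacle; once it is dispatched the induction runs through $i=d-1$, the core $\Psi_d$ is left unmodified, and the resulting factorization satisfies the lemma.
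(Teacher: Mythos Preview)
Your proposal is correct and follows essentially the same left-to-right orthogonalization sweep as the paper: the paper uses the spectral factorization $C_{\Psi_i,\Psi_i}=P_i\Lambda_i P_i^T$ and sets $\Psi_i\mapsto\Psi_i P_i\Lambda_i^{-1/2}$, $\Psi_{i+1}\mapsto\Lambda_i^{1/2}P_i^T\Psi_{i+1}$, which is exactly your step with $S_i=\Lambda_i^{1/2}P_i^T$ in place of a Cholesky factor. Your verification that $\hat\Psi_{i+1}\in V_{r_i\times r_{i+1}}^{(i+1)}$ is a point the paper leaves implicit, so your argument is in fact slightly more complete.
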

\begin{proof}
Let us first represent $u \in \mathfrak{T}_{r}^{(d)}$ relative 
to the tensor cores $\{\tilde{\Psi}_1,\ldots,\tilde{\Psi}_d\}$. 
Since $C_{\tilde{\Psi}_1,\tilde{\Psi}_1}$ is symmetric there 
exists an orthogonal matrix $P_1$ such that 
$P_1^T C_{\tilde{\Psi}_1,\tilde{\Psi}_1}P_1 = \Lambda_1$ is diagonal. 
Set ${\Psi}_1 = \tilde{\Psi}_1 P_1 \Lambda_1^{-1/2}$ and 
$\hat{\Psi}_2 = \Lambda_1^{1/2} P_1^T \tilde{\Psi}_2$ so that 
$C_{{\Psi}_1,{\Psi}_1} = I_{r_1 \times r_1}$ and 
${\Psi}_1 \hat{\Psi}_2 \tilde{\Psi}_3 \cdots \tilde{\Psi}_d = 
\tilde{\Psi}_1 \cdots \tilde{\Psi}_d$.
The matrix $C_{\hat{\Psi}_2,\hat{\Psi}_2}$ is symmetric so there 
exists an orthogonal matrix $P_2$ such that 
$P_2^T C_{\hat{\Psi}_2,\hat{\Psi}_2} P_2 = \Lambda_2$ is diagonal. 
Set ${\Psi}_2 = \hat{\Psi}_2 P_2 \Lambda_2^{-1/2}$ and 
$\hat{\Psi}_3 = \Lambda_2^{1/2}P_2^T \tilde{\Psi}_3$ so that 
$C_{{\Psi}_2,{\Psi}_2} = I_{ r_2 \times r_2}$ and 
${\Psi}_1 {\Psi}_2 \hat{\Psi}_3 \tilde{\Psi}_4 \cdots \tilde{\Psi}_d = 
\tilde{\Psi}_1 \cdots \tilde{\Psi}_d$.
Proceed recursively in this way until 
${\Psi}_1 {\Psi}_2 \cdots {\Psi}_{d-1} \hat{\Psi}_d = 
\tilde{\Psi}_1 \cdots \tilde{\Psi}_d$ with 
$C_{{\Psi}_i, {\Psi}_i} = I_{ r_i \times r_i}$, $i = 1,\ldots,d-1$. 
It is easy to check that the collection of 
cores 
$\{\Psi_1,\ldots,\Psi_d\}$ satisfies the 
conclusion of the Lemma.

\end{proof}

\section{Dynamical approximation of PDEs on FTT tensor manifolds with constant rank}
\label{sec:optimal_integration_theory}

Computing the solution to high-dimensional PDEs 
has become central to many new areas of 
application such as optimal mass transport \cite{Osher2019,Villani}, 
random dynamical systems \cite{Venturi_MZ,Venturi_PRS}, 
mean field games \cite{Weinan2019,Ruthotto2020}, and 
functional-differential equations \cite{VenturiSpectral,venturi2018numerical}.
In an abstract setting, such PDEs involve the 
computation of a function $u(x,t)$ governed 
by an autonomous evolution equation
\begin{align}
\begin{cases}
\displaystyle\frac{\partial u }{\partial t} = N(u), \vspace{0.1cm} \\
u(x,0) = u_0(x),
\end{cases}
\label{nonlinear-ibvp} 
\end{align}
where $u:  \Omega \times [0,T] \to\mathbb{R}$ 
is a $d$-dimensional (time-dependent) scalar field defined in 
the domain $\Omega\subseteq \mathbb{R}^d$ (see Eq. 
\eqref{Omega}) and $N$ is a nonlinear operator which 
may depend on the spatial variables and may 
incorporate boundary conditions.

We are interested in computing the 
{\em best dynamic approximation} 
of the solution to \eqref{nonlinear-ibvp} on the tensor 
manifold $\mathfrak{T}_{r}^{(d)}$ for all $t\geq 0$.
Such an approximation aims at determining the 
vector in the tangent plane of $\mathfrak{T}_{r}^{(d)}$ 
at the point $u$ that best approximates 
$\partial u/\partial t$ for each 
$u\in \mathfrak{T}_{r}^{(d)}$. 
One way to obtain the optimal vector in the tangent plane 
is by orthogonal projection which we now describe. 
For each $u \in L^2_{\mu}(\Omega)$ the tangent space 
$\mathcal{T}_u L^2_{\mu}(\Omega)$ is canonically isomorphic 
to $L^2_{\mu}(\Omega)$. Moreover, for 
each $u \in \mathfrak{T}_r^{(d)}$ the normal space 
to $\mathfrak{T}_r^{(d)}$ at the point $u$, denoted by 
$\mathcal{N}_{u} \mathfrak{T}_r^{(d)}$, 
consists of all vectors in $L^2_{\mu}(\Omega)$ that are 
orthogonal to $\mathcal{T}_{u} \mathfrak{T}_r^{(d)}$ 
with respect to the inner product in $L^2_{\mu}(\Omega)$.
The space $\mathcal{T}_u \mathfrak{T}_r^{(d)} \subseteq L^2_{\mu}(\Omega)$ is finite-dimensional
and therefore it is closed. Thus, 
for each $u \in \mathfrak{T}_r^{(d)}$ 
the space $L^2_{\mu}(\Omega)$ admits the decomposition 
\begin{equation}
L^2_{\mu}(\Omega) = \mathcal{T}_{u} \mathfrak{T}_r^{(d)} \oplus \mathcal{N}_{u} \mathfrak{T}_r^{(d)}.
\end{equation}
Assuming that the solution $u(x,t)$ to the 
PDE \eqref{nonlinear-ibvp} lives on 
the manifold $\mathfrak{T}_r^{(d)}$ at time $t$, we have that 
its velocity $\partial u/\partial t = N(u)$ can be 
decomposed uniquely into a tangent component 
and a normal component with respect 
to $\mathfrak{T}^{(d)}_r$, i.e.,  
\begin{equation}
\label{tangent_and_normal}
N(u) = v + w, \qquad v \in \mathcal{T}_u \mathfrak{T}_r^{(d)} , \quad w \in \mathcal{N}_u \mathfrak{T}_r^{(d)}.
\end{equation}
The orthogonal projection we are interested in computing for the 
best dynamic approximation is 
\begin{equation}
\label{orthogonal_projection}
\begin{aligned}
P_u : L^2_{\mu}(\Omega) &\to \mathcal{T}_u \mathfrak{T}_r^{(d)}, \\
N(u) &\mapsto P_u  N(u).
\end{aligned}
\end{equation}
In practice, we will compute the image of such a projection 
by solving the following minimization problem 
over the tangent space of 
$\mathfrak{T}^{(d)}_r$ at $u$ 
\begin{equation}
\label{min_problem}
\min_{v(x,t) \in  \mathcal{T}_{u(x,t)} \mathfrak{T}_{r}^{(d)}} \left\| v(x,t) - \frac{\partial u(x,t)}{\partial t} \right\|^2_{L^2_{\mu}(\Omega)} = \min_{v(x,t) \in  \mathcal{T}_{u(x,t)} \mathfrak{T}_{r}^{(d)}} \left\| v(x,t) - N(u(x,t)) \right\|^2_{L^2_{\mu}(\Omega)}
\end{equation}
for each fixed $t\in [0,T]$.
From an optimization viewpoint 
the following proposition establishes the 
existence and uniqueness of the optimal 
tangent vector.
\begin{proposition}
If $N(u) \not\in \mathfrak{T}_{r}^{(d)}$ then there 
exists a unique solution to the minimization problem 
\eqref{min_problem}, i.e., a unique global minimum.
\label{prop:globalminimum}
\end{proposition}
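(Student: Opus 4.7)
The plan is to reduce the statement to the Hilbert space closest-point theorem. First I would record that the objective $F(v) := \|v - N(u)\|^2_{L^2_{\mu}(\Omega)}$ is a continuous, strictly convex, coercive quadratic functional on the Hilbert space $L^2_{\mu}(\Omega)$, with Hessian equal to twice the identity. Consequently, any local minimum of $F$ on a convex feasible set is automatically the unique global minimum; no Palais--Smale or compactness argument is needed beyond this.

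Next I would verify that the feasible set $\mathcal{T}_u \mathfrak{T}_{r}^{(d)}$ is a closed convex subset of $L^2_{\mu}(\Omega)$. Linearity, hence convexity, is immediate from \eqref{FTT_dd_tangent_space}: sums and scalar multiples of velocities $\gamma'(0)$ of smooth curves through $u$ on $\mathfrak{T}_r^{(d)}$ are again velocities of such curves, by reparameterization and the smooth manifold structure supplied by section \ref{sec:manifold_of_tensors}. Closedness is the nontrivial point, and is precisely the observation already recorded in the paragraph preceding the proposition (via finite-dimensionality of $\mathcal{T}_u \mathfrak{T}_r^{(d)}$). An explicit justification, to be carried out in detail in the next section, is to parameterize tangent vectors through core variations $\dot{\Psi}_i$ of the FTT representation, modulo the gauge freedom given by the $G$-action of Proposition \ref{lemma:two_dd_decomps}, so that $\mathcal{T}_u \mathfrak{T}_{r}^{(d)}$ is realized as a finite sum of closed subspaces of $L^2_{\mu}(\Omega)$ (one per core).

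With convexity and closedness in place, I would invoke the Hilbert space closest-point theorem: for any $f$ in a Hilbert space $H$ and any closed convex subset $C \subseteq H$, there is a unique $v^\ast \in C$ realizing $\inf_{v \in C}\|f - v\|_H$, and it is the orthogonal projection $v^\ast = P_C(f)$. Applied with $H = L^2_{\mu}(\Omega)$, $C = \mathcal{T}_u \mathfrak{T}_{r}^{(d)}$, and $f = N(u)$, this yields the unique global minimum of \eqref{min_problem}, namely $v^\ast = P_u N(u)$ in the notation of \eqref{orthogonal_projection}.

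The main obstacle is the closedness of the tangent space, and this is really where the manifold structure of section \ref{sec:manifold_of_tensors} earns its keep; everything else is a standard Hilbert space argument. I note in passing that the hypothesis $N(u) \notin \mathfrak{T}_{r}^{(d)}$ does not enter the existence/uniqueness argument at all, since the projection theorem gives both unconditionally; it appears intended only to rule out the degenerate case in which the minimum value of $F$ vanishes, i.e., $N(u)$ already lies in $\mathcal{T}_u \mathfrak{T}_{r}^{(d)}$.
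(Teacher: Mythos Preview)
Your argument is correct and, if anything, cleaner than the paper's. Both proofs start from the same two facts: the feasible set $\mathcal{T}_u\mathfrak{T}_r^{(d)}$ is a linear (hence convex) subspace, and it is closed because it is finite-dimensional. You then invoke the Hilbert projection theorem directly, which delivers existence and uniqueness in one stroke. The paper instead argues strict convexity of $F^{1/2}$ via the triangle inequality, and it is precisely in the equality case of that inequality that the hypothesis is invoked: if $q(v_1-N(u))$ and $(1-q)(v_2-N(u))$ are positive multiples of each other with $v_1\neq v_2$, one deduces $N(u)\in\mathcal{T}_u\mathfrak{T}_r^{(d)}$, which is excluded. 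Note that the paper's proof actually uses $N(u)\notin\mathcal{T}_u\mathfrak{T}_r^{(d)}$ rather than the stated $N(u)\notin\mathfrak{T}_r^{(d)}$; these are different conditions, so your closing remark slightly conflates the manifold with its tangent space. Your observation that the hypothesis is dispensable is nonetheless correct: $v\mapsto\|v-N(u)\|^2$ is unconditionally strictly convex (its Hessian is $2I$, as you say), and the projection theorem needs no such assumption. What your route buys is a self-contained existence argument; the paper's strict-convexity route yields uniqueness but leaves existence implicit in the surrounding discussion of the orthogonal decomposition $L^2_\mu(\Omega)=\mathcal{T}_u\mathfrak{T}_r^{(d)}\oplus\mathcal{N}_u\mathfrak{T}_r^{(d)}$.
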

\begin{proof}
We first notice that the feasible set $\mathcal{T}_{u} 
\mathfrak{T}_{r}^{(d)}$ is a real vector space 
and thus a convex set.
Next we show that the functional 
$F[v] = \| v - N(u) \|^2_{L^2_{\mu}(\Omega)}$ is 
strictly convex. Indeed, take $v_1, v_2 \in \mathcal{T}_u \mathfrak{T}_{r}^{(d)}$ distinct and $q \in (0,1)$.  Then 
\begin{equation}
\label{strictly_convex_functional}
\begin{aligned}
\left(F[q v_1 + (1-q)v_2]\right)^{\frac{1}{2}} &= \|q v_1 + (1-q) v_2 - N(u) \|_{L^2_{\mu}(\Omega)} \\
&= \|q (v_1 - N(u) ) + (1-q) \left( (v_2 - N(u) \right) \|_{L^2_{\mu}(\Omega)} \\
&\leq q \| v_1 - N(u) \| + (1-q) \| v_2 - N(u) \|_{L^2_{\mu}(\Omega)},
\end{aligned}
\end{equation}
with equality if and only if there exists an $\alpha > 0$ such 
that $q (v_1 - N(u) ) = \alpha (1-q) ( v_2 - N(u) )$. However, 
this implies that $v_1 - \beta v_2 = (1-\beta) N(u)$ 
for some real number $\beta$, whence 
$N(u) \in \mathcal{T}_{u} \mathfrak{T}_{r}^{(d)}$. 
Therefore if $N(u) \not\in \mathcal{T}_{u} 
\mathfrak{T}_{r}^{(d)}$ then the inequality in 
\eqref{strictly_convex_functional} is strict and 
the functional $\left(F[v]\right)^{\frac{1}{2}}$ is 
strictly convex. Since the function $x^2$ is strictly 
increasing on the image of $F^{\frac{1}{2}}$ 
it follows that $F$ is strictly convex and thus 
admits a unique global minimum over the 
feasible set $\mathcal{T}_{u} \mathfrak{T}_{r}^{(d)}$. 

\end{proof}

\vs
\noindent
It can easily be shown that the unique solution to 
the optimization problem \eqref{min_problem} is 
$P_u N(u)$.
Next, we will use this optimization framework 
for computing the best tangent vector 
to integrate the PDE \eqref{nonlinear-ibvp} forward 
in time on the manifold $\mathfrak{T}_r^{(d)}$.
To this end, let us first assume 
that the initial condition $u_0\in \mathfrak{T}_{r}^{(d)}$. 
If not, $u_0$ can be projected onto 
$\mathfrak{T}_{r}^{(d)}$ using the methods 
described in section \ref{sec:function_decomp}. 
In both cases, this allows us to represent $u_0(x)$ as 
\begin{equation}
u_0(x) = \Psi_1(0) \Psi_2(0) \cdots \Psi_d(0),
\end{equation}
with $C_{\Psi_i(0),\Psi_i(0)} = I_{r_i \times r_i}$ 
for $i = 1,2,\ldots,d-1$. A representation of this form 
(with diagonal matrices $C_{\Psi_i(0),\Psi_i(0)}$) 
always exists thanks to Lemma 
\ref{lemma:orthogonal_FTT_cores}. 
To compute the unique solution of \eqref{min_problem}, 
we expand an arbitrary curve $\gamma(s)$ of class 
$\mathcal{C}^1$ on the manifold $\mathfrak{T}_{r}^{(d)}$ 
passing through the point $u\in \mathfrak{T}_{r}^{(d)}$ at 
$s=0$ in terms of $s$-dependent FTT cores. This yields 
\begin{equation}
\begin{aligned}
\gamma(s) &= \Psi_1(s) \cdots \Psi_d(s) \\
&= \sum_{\alpha_0,\ldots,\alpha_d=1}^{\bm r} \psi_1(s;\alpha_0,\alpha_1) \psi_2(s;\alpha_1,\alpha_2) \cdots \psi_d(s;\alpha_{d-1},\alpha_d),
\end{aligned}
\end{equation}
which allows us to represent any element of the tangent 
space $\mathcal{T}_{u} \mathfrak{T}_{r}^{(d)}$  at $u$ 
as
\begin{equation}
\label{velocity_in_terms_of_cores}
v = \frac{\partial }{\partial s} \left[ \Psi_1(s) \cdots \Psi_d(s) \right]_{s=0},
\end{equation} 
with $\Psi_1(0) \cdots \Psi_d(0) = u$.
At this point we notice that minimizing the 
functional in \eqref{min_problem} 
over the tangent space $\mathcal{T}_{u} \mathfrak{T}_{r}^{(d)}$ 
is equivalent to minimizing the same functional over the velocity 
of each of the FTT cores. For notational 
convenience, hereafter we omit evaluation at $s=0$ 
of all quantities depending on 
the curve parameter $s$. For example, we will write
\begin{equation}
\psi_i(\alpha_{i-1},\alpha_i)=\left.\psi_i(s;\alpha_{i-1},\alpha_i)
\right|_{s=0},\qquad 
\frac{\partial \psi_i(\alpha_{i-1},\alpha_i)}{\partial s}=
\left.\frac{\partial \psi_i(s;\alpha_{i-1},\alpha_i)}{\partial s}
\right|_{s=0}.
\end{equation} 
With this notation, the minimization 
problem \eqref{min_problem} is equivalent to 
\begin{equation}
\label{min_over_cores}
\min_{\frac{\partial \psi_1(\alpha_0,\alpha_1)}{\partial s}, \ldots, 
\frac{\partial{\psi}_d(\alpha_{d-1},\alpha_d)}{\partial s}} \left\|  
\frac{\partial }{\partial s} \left( \displaystyle\sum_{\alpha_0,\ldots,
\alpha_d=1}^{r} \psi_1(\alpha_0,\alpha_1) \psi_2(\alpha_1,
\alpha_2) \cdots \psi_d(\alpha_{d-1},\alpha_d) \right)- N(u) 
\right\|^2_{L^2_{\mu}(\Omega)}.
\end{equation}
Of course, in view of Lemma \ref{lemma:two_dd_decomps} 
one curve $\gamma(s)$ has many different expansions 
in terms of $s$-dependent FTT cores, which 
can be mapped into one another via collections of 
$s$-dependent invertible matrices. 
From Lemma \ref{lemma:orthogonal_FTT_cores} it is clear that 
any curve $\gamma(s) \in \mathcal{C}^1
\left((-\delta,\delta), \mathfrak{T}_{r}^{(d)}\right)$ 
passing through $u$ at $s=0$ admits the FTT decomposition 
$\gamma(s) = \Psi_1(s) \cdots \Psi_d(s)$, where all auto-correlation matrices $C_{\Psi_i,\Psi_i}(s)$ ($i = 1,\ldots,d-1$) 
are identity matrices for all for all $s \in (-\delta,\delta)$, i.e., 
\begin{equation}
\label{general_constraint}
C_{\Psi_i,\Psi_i}(s) = \left\langle \Psi_i^T(s), \Psi_i(s) \right\rangle_{L^2_{\mu_i}(\Omega_i)}=I_{r_i \times r_i}.
\end{equation}
Differentiating \eqref{general_constraint} with respect 
to $s$ yields  
\begin{equation}
\label{skew_symm_DO_constraint}
\left\langle \frac{\partial \Psi_i^T(s)}{\partial s}, \Psi_i(s) \right\rangle_{L^2_{\mu_i}(\Omega_i)} = - \left\langle \Psi_i^T(s), \frac{\partial \Psi_i(s)}{\partial s} \right\rangle_{L^2_{\mu_i}(\Omega_i)},
\end{equation}
which is attained when 
\begin{equation}
\label{DO_constraints_multilevel}
\left\langle \frac{\Psi_i^T(s)}{\partial s}, \Psi_i(s) \right\rangle_{L^2_{\mu_i}(\Omega_i)} = 0_{r_i \times r_i}, \qquad \forall s \in (-\delta,\delta).
\end{equation} 
Enforcing \eqref{DO_constraints_multilevel} and 
prescribing \eqref{general_constraint} at $s=0$ 
is equivalent to enforcing \eqref{general_constraint} for all 
$s \in (-\delta,\delta)$.
With this characterization of continuously differentiable 
curves $\gamma(s)$ passing through $u\in \mathfrak{T}^{(d)}_r$, we can recast the minimization problem \eqref{min_over_cores} 
in terms of FTT cores constrained\footnote{We are also assuming $u = \Psi_1 \cdots \Psi_d$ is such that $C_{\Psi_i,\Psi_i} = I_{r_i \times r_i}$ for $i = 1,\ldots,d-1$ so the constraint \eqref{general_constraint} is satisfied at $s=0$.} by \eqref{DO_constraints_multilevel} 
\begin{equation}
\label{constrained_min_problem}
\begin{cases}
\displaystyle\min_{\frac{\partial \psi_1(\alpha_0,\alpha_1)}{\partial s}, \ldots, 
\frac{\partial{\psi}_d(\alpha_{d-1},\alpha_d)}{\partial s}}
 \left\|  \frac{\partial }{\partial s} \left[ \displaystyle\sum_{\alpha_0,\ldots,\alpha_d=1}^{ r} \psi_1(\alpha_0,\alpha_1) \psi_2(\alpha_1,\alpha_2) \cdots \psi_d(\alpha_{d-1},\alpha_d) \right] - N(u) \right\|^2_{L^2_{\mu}(\Omega)}\vspace{0.1cm}\\ 
\text{subject to:}\vspace{0.2cm}\\
\quad\displaystyle\left\langle \frac{\partial \psi_i(\alpha_{i-1},\alpha_i)}{\partial s}, \psi_i(\alpha_{i-1},\beta_i) \right\rangle_{L^2_{\tau \times \mu_i}(\mathbb{N} \times \Omega_i)} = 0 \quad \forall i = 1,2,\ldots, d-1,\quad  
\alpha_i,\beta_i = 1,2,\ldots,r_i,
\end{cases}
\end{equation}
which by the discussion above still has the entire 
tangent space $\mathcal{T}_{u} \mathfrak{T}_{r}^{(d)}$ 
as the feasible set. The minimization problem
\eqref{constrained_min_problem} is a convex 
optimization problem subject to linear equality 
constraints, which therefore is still convex. Hence, any 
local minimum is also a global minimum. Moreover 
a minimum of  \eqref{constrained_min_problem} 
provides the velocities of FTT cores which allow 
for the construction of the unique global minimum 
to the optimization problem \eqref{min_problem} 
via equation \eqref{velocity_in_terms_of_cores}. 
To solve \eqref{constrained_min_problem} it is 
convenient to construct an action functional 
$\mathcal{A}$ that introduces the constraints 
via Lagrange multipliers $\lambda_{\alpha_i \beta_i}^{(i)}$
\begin{equation}
\label{constrained_min_problem_lagrangian}
\begin{aligned}
\mathcal{A}&\left(\frac{\partial 
\psi_1(\alpha_0,\alpha_1)}{\partial s}, \ldots, \frac{\partial{\psi}_d(\alpha_{d-1},\alpha_d)}{\partial s}\right) 
= \\ 
&\left\|  \frac{\partial }{\partial s} \left[ \displaystyle\sum_{\alpha_0,\ldots,\alpha_d=1}^{ r} \psi_1(\alpha_0,\alpha_1) \psi_2(\alpha_1,\alpha_2) \cdots \psi_d(\alpha_{d-1},\alpha_d) \right] - N(u) \right\|^2_{L^2_{\mu}(\Omega)} + \\
&\sum_{i=1}^{d-1} \sum_{\alpha_i,\beta_i=1}^{ r_i} 
\lambda_{\alpha_i \beta_i}^{(i)} \left\langle \frac{\partial \psi_i(\alpha_{i-1},\alpha_i)}{\partial s},
\psi_i(\alpha_{i-1},\beta_i)\right\rangle_{L^2_{\tau \times \mu_i}(\mathbb{N}\times \Omega_i)}.
\end{aligned}
\end{equation}
At this point, we have all elements to formulate the FTT 
propagator for the nonlinear PDE \eqref{nonlinear-ibvp}, 
which is the system of Euler-Lagrange equations 
corresponding to the unique global minimum of \eqref{constrained_min_problem_lagrangian}.
Such propagator allows us to determine the best dynamic
approximation of the solution to \eqref{nonlinear-ibvp}
on a FTT tensor manifold with constant rank.

\begin{theorem}
\label{thm:evolution_equations}
The unique global minimum of the functional \eqref{constrained_min_problem_lagrangian} is 
attained at FTT tensor cores satisfying the PDE system 
\begin{equation}
\label{DO-FTT_system}
\begin{aligned}
\frac{\partial \Psi_1}{\partial t} &= \left[ \left\langle N(u),\Phi_1^T \right\rangle_{2,\ldots,d} 
- \Psi_1 \left\langle \left\langle \Psi_1^T, N(u) \right\rangle_{1}, \Phi_1^T \right\rangle_{2,\ldots,d} \right] C_{\Phi_1^T,\Phi_1^T}^{-1}, \\
\frac{\partial \Psi_k}{\partial t} &= \left[ \left\langle \left\langle \Psi_{k-1}^T \cdots \Psi_1^T, N(u) \right\rangle_{1,\ldots,k-1} , \Phi_k^T \right\rangle_{k+1,\ldots,d} - \right.\\
&\quad\quad \left.\Psi_k \left\langle \left\langle \Psi_k^T \cdots \Psi_1^T, N(u) \right\rangle_{1,\ldots,k} \Phi_k^T \right\rangle_{k+1,\ldots,d} \right] C_{\Phi_k^T,\Phi_k^T}^{-1}, \quad \quad  k = 2,3,\ldots,d-1, \\
\frac{\partial \Psi_d}{\partial t} &= \left\langle \Psi_{d-1}^T \cdots \Psi_1^T, N(u) \right\rangle_{1,\ldots,d-1}.
\end{aligned}
\end{equation}
In these equations, $\langle \cdot, \cdot \rangle_{i_k,\ldots,i_{j}}
= \langle \cdot, \cdot \rangle_{L^2_{\mu_{i_k} \times \cdots \times \mu_{i_j}}(\Omega_{i_k} \times \cdots \times \Omega_{i_j})}$ and $\Phi_k$ is the multivariate FTT core 
in the $k$-th step of the FTT decomposition, i.e., the column 
vector that has components $\varphi_k(\alpha_k; x_{k+1},\ldots,x_d)$ ($\alpha_k=1,\ldots r_k$) -- see, e.g.,  Eq. \eqref{FFT_level_2}.
\end{theorem}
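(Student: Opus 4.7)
The plan is to derive the Euler--Lagrange equations associated with the augmented functional $\mathcal{A}$ and then eliminate the Lagrange multipliers using the gauge constraints. Since $\mathcal{A}$ is a strictly convex quadratic functional in the velocities $\partial \psi_k(\alpha_{k-1},\alpha_k)/\partial s$ subject to linear equality constraints, stationarity characterizes the unique global minimum (cf.\ Proposition \ref{prop:globalminimum}).

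First, I set up convenient notation: write $\Psi_{<k} = \Psi_1 \Psi_2 \cdots \Psi_{k-1}$ (a row-vector-valued function of $x_1,\ldots,x_{k-1}$) and $\Phi_k$ the column-vector-valued function of $x_{k+1},\ldots,x_d$ with entries $\varphi_k(\alpha_k;x_{k+1},\ldots,x_d) = [\Psi_{k+1}\cdots\Psi_d]_{\alpha_k}$, so that at $s=0$ the FTT tensor factorizes as $u = \Psi_{<k}\,\Psi_k\,\Phi_k$. Applying Leibniz to $v = \partial_s(\Psi_1\cdots\Psi_d)|_{s=0}$ and taking the first variation of $\mathcal{A}$ with respect to an entry $\partial \psi_k(\alpha_{k-1},\alpha_k)/\partial s$ gives, for $k = 1,\ldots,d-1$, a stationarity condition of the form
\begin{equation*}
\bigl\langle \Psi_{<k}^T (v - N(u)),\, \Phi_k^T \bigr\rangle_{1,\ldots,k-1,\,k+1,\ldots,d} + \Lambda_k \Psi_k = 0,
\end{equation*}
where $\Lambda_k$ is the $r_k \times r_k$ matrix of Lagrange multipliers $\lambda^{(k)}_{\alpha_k\beta_k}$. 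For $k=d$ the constraint sum in \eqref{constrained_min_problem_lagrangian} does not contribute, so the stationarity equation reads simply $\langle \Psi_{<d}^T(v - N(u)),\,\cdot\,\rangle_{1,\ldots,d-1} = 0$, which yields the last line of \eqref{DO-FTT_system} immediately.

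The central step is to reduce these $d-1$ coupled stationarity conditions to the decoupled formulas \eqref{DO-FTT_system}. I expand $v = \sum_{j=1}^d \Psi_{<j}\,(\partial_s\Psi_j)\,\Phi_j$. Using the orthonormality $\langle \Psi_i^T,\Psi_i\rangle_{L^2_{\mu_i}(\Omega_i)} = I_{r_i\times r_i}$ from Lemma \ref{lemma:orthogonal_FTT_cores} (available by the assumption at $s=0$), the gauge constraint \eqref{DO_constraints_multilevel} $\langle \partial_s\Psi_i^T,\Psi_i\rangle = 0$ for $i = 1,\ldots,d-1$, and the factorization of the measure/inner product into the disjoint variable groups appearing in $\Psi_{<k}$ and $\Phi_k$, one verifies that all cross-terms $\langle \Psi_{<k}^T \Psi_{<j}\,(\partial_s\Psi_j)\,\Phi_j,\Phi_k^T\rangle$ with $j \neq k$ collapse to zero. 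Only the diagonal term $\langle \Psi_{<k}^T\Psi_{<k}\,(\partial_s\Psi_k)\,\Phi_k,\Phi_k^T\rangle = (\partial_s\Psi_k)\,C_{\Phi_k^T,\Phi_k^T}$ survives, so the stationarity condition decouples into an equation for $\partial_s\Psi_k$ alone.

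Finally, I solve for $\Lambda_k$ by left-multiplying the $k$-th stationarity condition by $\Psi_k^T$, integrating against $\mu_k$, and using $\langle \Psi_k^T,\Psi_k\rangle = I$: this isolates $\Lambda_k = -\langle \Psi_k^T\Psi_{<k}^T N(u),\Phi_k^T\rangle_{1,\ldots,k,k+1,\ldots,d}\cdot I$ up to the $C_{\Phi_k^T,\Phi_k^T}$ factor. Substituting this expression for $\Lambda_k$ back into the decoupled stationarity equation, identifying $\partial_s\Psi_k|_{s=0}$ with $\partial\Psi_k/\partial t$, and inverting the (invertible by the rank assumption on $\Phi_k$) Gram matrix $C_{\Phi_k^T,\Phi_k^T}$ yields exactly the formulas in \eqref{DO-FTT_system}. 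The main obstacle I anticipate is the careful bookkeeping in showing the vanishing of cross-terms $j\neq k$; this is essentially a chain of applications of Fubini together with the two orthogonality identities, but it must be performed for each pair $(j,k)$ with attention to how $\Psi_{<\min(j,k)}$, the middle cores, and $\Phi_{\max(j,k)}$ pair up in the nested inner products.
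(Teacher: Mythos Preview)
Your overall architecture matches the paper's, but the central simplification step contains a genuine error. The claim that all cross-terms with $j\neq k$ vanish is only half right: for $j<k$ the argument works, since after using the left-orthonormality $\langle\Psi_i^T,\Psi_i\rangle_i=I$ to collapse the first $j-1$ integrals you are left with $\langle\Psi_j^T,\partial_s\Psi_j\rangle_j$, which is zero by the gauge constraint. For $j>k$, however, no such mechanism is available. After integrating out $x_1,\ldots,x_{k-1}$ the $j$-th cross-term becomes $\Psi_k\,M_j$ with $M_j=\bigl\langle[\Psi_{k+1}\cdots\Psi_{j-1}(\partial_s\Psi_j)\Phi_j],\Phi_k^T\bigr\rangle_{k+1,\ldots,d}$, and there is no right-orthonormality on the cores $\Psi_{k+1},\ldots,\Psi_d$ (nor any gauge condition on $\Psi_d$) that would force $M_j=0$. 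Summing over $j>k$ these terms are exactly $\Psi_k\langle\partial_s\Phi_k,\Phi_k^T\rangle_{k+1,\ldots,d}$, which is nonzero in general.

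What actually happens in the paper's proof is that this surviving term is carried along: the stationarity equation reads $(\partial_s\Psi_k)C_{\Phi_k^T,\Phi_k^T}+\Psi_k\langle\partial_s\Phi_k,\Phi_k^T\rangle=\langle\Psi_{<k}^T N(u),\Phi_k^T\rangle-\tfrac12\Psi_k\Lambda_k^T$. When you then contract with $\Psi_k^T$ to isolate $\Lambda_k$, the $\langle\partial_s\Phi_k,\Phi_k^T\rangle$ piece enters the multiplier (this is the second term in the paper's formula \eqref{multiplier}), and upon back-substitution it cancels against itself. So the final equations \eqref{DO-FTT_system} are correct, but not because the $j>k$ cross-terms vanish; rather, they lie in the range of $\Psi_k$ and are therefore absorbed by the Lagrange multiplier. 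Your write-up should keep the $\Psi_k\langle\partial_s\Phi_k,\Phi_k^T\rangle$ term explicitly and show this cancellation, instead of asserting a vanishing that does not hold.
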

We prove  Theorem \ref{thm:evolution_equations} 
in \ref{sec:minimizing_lagrangian}. The PDE system 
\eqref{DO-FTT_system} will be referred to 
as dynamically orthogonal \cite{do} 
functional tensor train (DO-FTT) propagator .
As the solution evolves in time on the tensor manifold 
$\mathfrak{T}_{r}^{(d)}$, it is possible for some of the cores 
to become linearly dependent. As a consequence, the 
auto-correlation matrices $C_{\Phi_k^T,\Phi_k^T}$ 
become singular and the equations \eqref{DO-FTT_system} 
are no longer valid. In this case, the solution lives on a 
tensor manifold of smaller rank, say $\mathfrak{T}^{(d)}_{s}$, 
where $s_i \leq r_i$ for all $i = 1,2,\ldots,d$, and 
the dynamic tensor approximation can be constructed on $\mathfrak{T}^{(d)}_{s}$.
We conclude this section by emphasizing that it is possible 
to transform the dynamically orthogonal tensor cores 
$\Psi_i$ into bi-orthogonal cores (with corresponding 
bi-orthogonal equations) by adopting the proofs 
given in \cite{Dektor_2020,do/bo_equiv}. 
In light of the discussion above on the optimality of the dynamically orthogonal FTT integrator on $\mathfrak{T}_{r}^{(d)}$ 
and Lemma \ref{lemma:two_dd_decomps}, it is clear that 
FTT with bi-orthogonal cores is also an optimal\footnote{By selecting a collection of 
time-dependent invertible matrices $P_i(s) \in \text{GL}_{r_i \times r_i}(\mathbb{R})$, $i = 2,3,\ldots,d-1$, defined by the matrix differential equation 
\begin{equation}
\label{MDE}
\begin{cases}
\displaystyle \frac{d P_i(s)}{d s} = G_i(P_i)\vspace{0.1cm}\\
P_i(0) = P_{i,0}
\end{cases}
\end{equation} 
it is possible to develop evolution equations different 
than \eqref{DO-FTT_system} for $\Psi_i(s)$, 
which still solve the minimization problem 
\eqref{min_over_cores}. In fact all possible solutions 
to \eqref{min_over_cores} can be obtained in this 
way.} dynamic 
approximation on $\mathfrak{T}_{r}^{(d)}$.

\section{An application to the Fokker--Planck equation}
\label{sec:numerics}

In this section we demonstrate the 
dynamically orthogonal FTT integrator \eqref{DO-FTT_system} 
on a four-dimensional ($d=4$) 
Fokker--Planck equation with non-constant drift and 
diffusion coefficients. As is well known \cite{Risken},
the Fokker--Planck equation describes the evolution 
of the probability density function (PDF) of the state vector
solving the It\^o stochastic differential equation (SDE)
\begin{equation}
\label{Ito_SDE}
d X_t = \mu(X_t,t)dt + \sigma(X_t,t)dW_t.
\end{equation}
Here, $X_t$ is the $d$-dimensional state vector, $\mu(X_t,t)$ is the $d$-dimensional drift, $\sigma(X_t,t)$ is an $d \times m$ 
matrix and $W_t$ is an $m$-dimensional standard 
Wiener process. The Fokker--Planck equation that 
corresponds to \eqref{Ito_SDE} has the form 
\begin{equation}
\label{Fokker_Planck}
\frac{\partial p(x,t)}{\partial t} = \mathcal{L}(x,t) p(x,t), \qquad 
p(x,0) = p_0(x),
\end{equation}
where $p_0(x)$ is the PDF of the initial state $X_0$,
$\mathcal{L}$ is a second-order linear differential operator 
defined as
\begin{equation}
\mathcal{L}(x,t)p(x,t) = -\sum_{k=1}^d 
\frac{\partial }{\partial x_k}\left(\mu_k(x,t)p(x,t)\right)+ 
\sum_{k,j=1}^d \frac{\partial^2}{\partial x_k\partial x_j}
\left( D_{ij}(x,t)p(x,t)\right),
\label{L}
\end{equation}
and $D(x,t)=\sigma(x,t)\sigma(x,t)^T/2$ is the diffusion 
tensor. 
For our numerical demonstration we set 
\begin{equation}
\label{drift_diffusion}
\mu(x) = \alpha \begin{bmatrix}
\sin(x_1) \\
\sin(x_3) \\
\sin(x_4) \\
\sin(x_1)
\end{bmatrix}, \qquad
\sigma(x) = \sqrt{2 \beta} \begin{bmatrix}
g(x_2) & 0 & 0 & 0 \\
0 & g(x_3) & 0 & 0 \\
0 & 0 & g(x_4) & 0 \\
0 & 0 & 0 & g(x_1) \\ 
\end{bmatrix},
\end{equation}
where we define $g(x)=\sqrt{1 + k \sin(x)}$. 
With the drift and diffusion matrices 
chosen in \eqref{drift_diffusion} the 
operator \eqref{L} takes the form  
\begin{equation}
\label{Fokker_Planck_op}
\begin{aligned}
\mathcal{L} =& -\alpha \left(\cos(x_1) + \sin(x_1) \frac{\partial }{\partial x_1} + \sin(x_3) \frac{\partial }{\partial x_2} + \sin(x_4) \frac{\partial }{\partial x_3} + \sin(x_1) \frac{\partial }{\partial x_4} \right) \\
&+ \beta \left( (1 + k \sin(x_2) ) \frac{\partial^2 }{\partial x_1^2} 
+ (1 + k \sin(x_3) ) \frac{\partial^2 }{\partial x_2^2} + 
(1 + k \sin(x_4) ) \frac{\partial^2 }{\partial x_3^2} + 
(1 + k \sin(x_1) ) \frac{\partial^2 }{\partial x_4^2} \right).
\end{aligned}
\end{equation}
This is a linear, time-independent separable operator of 
rank $9$, since it can be written as
\begin{equation}
\label{separable_operator}
\mathcal{L} = \sum_{i=1}^9 L_i^{(1)} \otimes L_i^{(2)} 
\otimes L_i^{(3)} \otimes L_i^{(4)} ,
\end{equation}
where each $L_i^{(j)}$ operates on $x_j$ 
only. Specifically, we have  
\begin{equation}
\begin{array}{llll}
L_1^{(1)} = -\alpha \cos(x_1),  &\displaystyle L_2^{(1)} = -\alpha \sin(x_1) \frac{\partial}{\partial x_1},  & \displaystyle L_3^{(2)} = -\alpha \frac{\partial}{\partial x_2},  
&L_3^{(3)} = \sin(x_3), \vspace{0.1cm}\\
L_4^{(3)} =\displaystyle  -\alpha \frac{\partial}{\partial x_3},  
&L_4^{(4)} = \sin(x_4),  &L_5^{(1)} = -\alpha \sin(x_1),  &L_5^{(4)} = \displaystyle \frac{\partial}{\partial x_4}, 
\vspace{0.1cm}\\ 
L_6^{(1)} = \displaystyle \beta \frac{\partial^2}{\partial x_1^2}, & L_6^{(2)} = 1+k\sin(x_2),  &L_7^{(2)} = \displaystyle \beta \frac{\partial^2}{\partial x_2^2}, & L_7^{(3)} = 1+k\sin(x_3), 
\vspace{0.1cm}\\
L_8^{(3)} = \displaystyle \beta \frac{\partial^2}{\partial x_3^2},  
&L_8^{(2)} = 1+k\sin(x_4),  &L_9^{(4)} = \displaystyle \beta \frac{\partial^2}{\partial x_4^2},  &L_9^{(1)} = 1+k\sin(x_1),
\end{array}
\end{equation}
and all other unspecified $L_i^{(j)}$ are identity operators. 
We set the 
parameters in \eqref{drift_diffusion} as 
$\alpha = 0.1, \beta = 2.0, k = 1.0$ and 
consider the domain $\Omega = [0,2\pi]^4$ 
with periodic boundary conditions.
The initial PDF is set as 
\begin{equation}
\label{PDF_IC}
p_0(x) =\frac{\exp(\cos(x_1 + x_2 + x_3 + x_4))}{\|\exp(\cos(x_1 + x_2 + x_3 + x_4))\|_{L^1(\Omega)}}.
\end{equation}
To compute the FTT decomposition of $p_0(x)$ 
we first discretize it on a tensor product grid of 
$21$ evenly-spaced Fourier points in each variable $x_j$
($194481$ total points). The discrete tensor is then 
decomposed in the TT format using the 
TT-toolbox \cite{TT-toolbox} with 
appropriate quadrature weights (see \cite[\S 4.4]{Bigoni_2016}) 
and threshold set to $(21/2\pi)^4 \epsilon$.
In particular, we set $\epsilon =\{10^{-8}, 10^{-5}, 10^{-3}\}$ 
to obtain 
\begin{equation}
\label{FTT_IC}
\begin{aligned}
p_0(x,\epsilon) &= \sum_{\alpha_0,\ldots,\alpha_4=1}^{r(\epsilon)} \Psi_1(0) \Psi_2(0) \Psi_3(0) \Psi_4(0),
\end{aligned}
\end{equation}
with FTT ranks 
\begin{equation}
r(10^{-8}) = \begin{bmatrix}
1 \\
15 \\
15 \\
15 \\
1
\end{bmatrix}, \qquad 
r(10^{-5}) = \begin{bmatrix}
1 \\
9 \\
9 \\
9 \\
1
\end{bmatrix}, \qquad 
r(10^{-3}) = \begin{bmatrix}
1 \\
5 \\
5 \\
5 \\
1
\end{bmatrix}.
\label{FTTranks}
\end{equation}

To obtain a benchmark solution with which to compare 
the DO-FTT solution, 
the PDE \eqref{Fokker_Planck} with initial 
condition \eqref{FTT_IC} is solved on a full tensor 
product grid of points on the hypercube 
$\Omega = [0,2\pi]^4$ with $21$ evenly 
spaced points in each direction. 
Derivatives in the operator $\mathcal{L}$ are 
computed with pseudo-spectral differentiation 
matrices \cite{spectral_methods_book}, 
and the resulting semi-discrete approximation 
(ODE system) is integrated with explicit 
four stage fourth order Runge Kutta method 
using time step $\Delta t = 10^{-3}$. 
The numerical solution we obtained in this 
way is denoted by $p_{f}(x,t)$. In Figure \ref{fig:FP_solution_time_evolution} (middle row) 
we plot the two-dimensional marginal
\begin{equation}
\label{marg_pdf}
p(x_1,x_2,t) = \int_{0}^{2\pi} \int_{0}^{2 \pi} p(x_1,x_2,x_3,x_4,t) dx_3 dx_4
\end{equation}
at $t=0.1$, $t=0.5$ and $t=1$. 

\begin{figure}[t]
\hspace{0.2cm}
\centerline{\footnotesize\hspace{-0.2cm}$t = 0.1$ \hspace{4.6cm} $t = 0.5$  \hspace{4.5cm} $t = 1.0$ }

\hspace{0.1cm} 
\centerline{
	\rotatebox{90}{\hspace{1.8cm}\footnotesize DO-FTT }
		\includegraphics[width=0.34\textwidth]{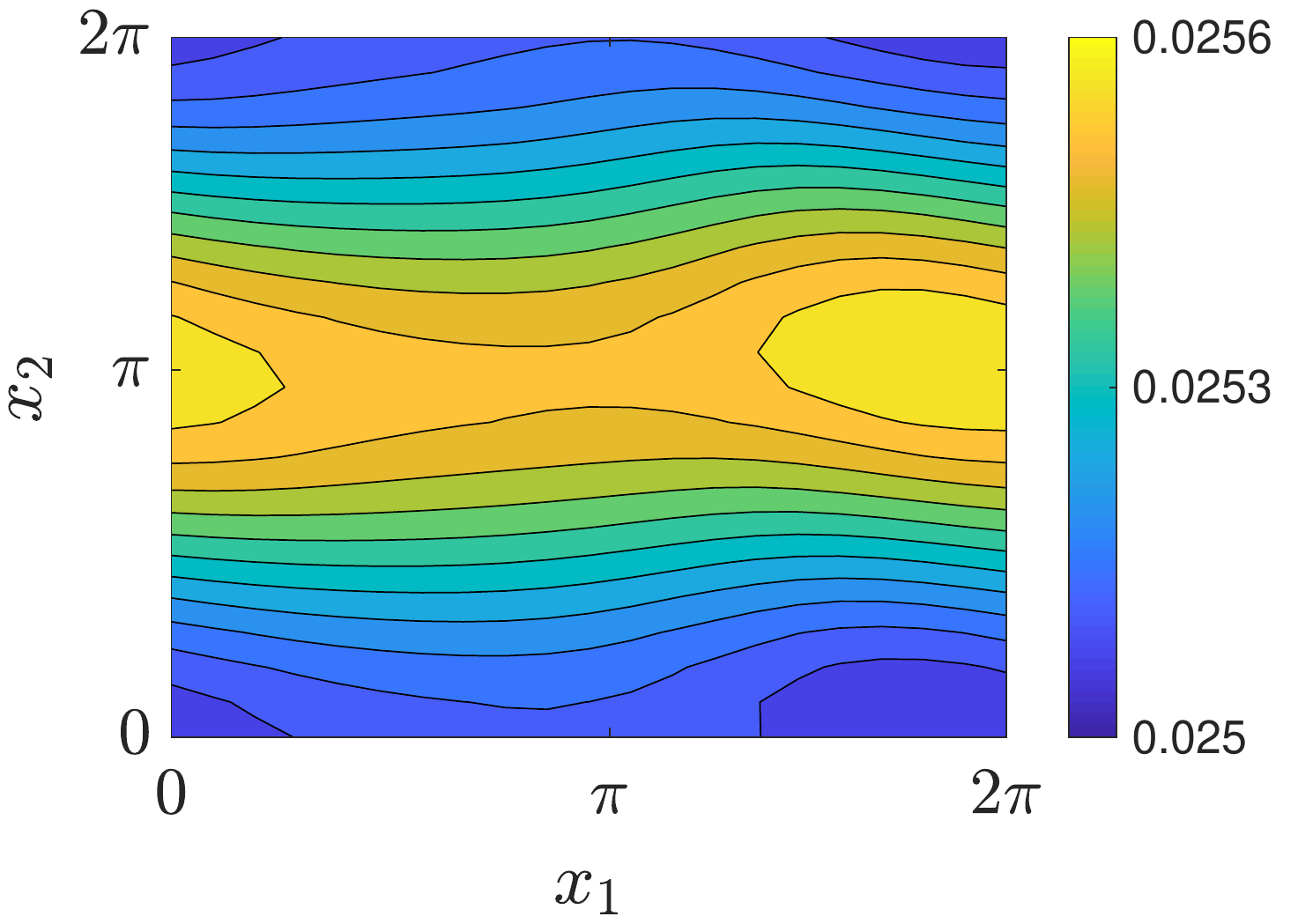}
		\includegraphics[width=0.34\textwidth]{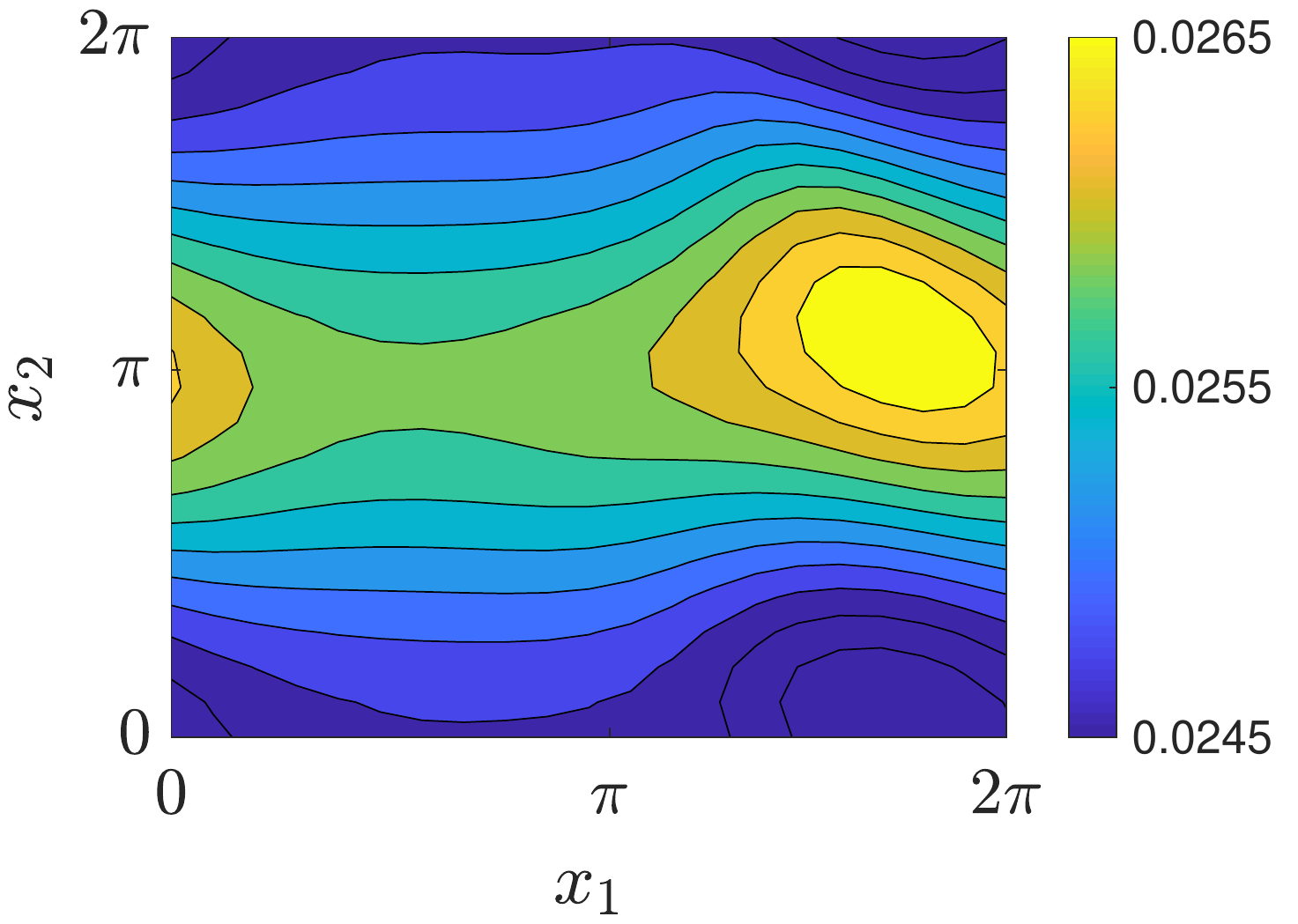}
		\includegraphics[width=0.34\textwidth]{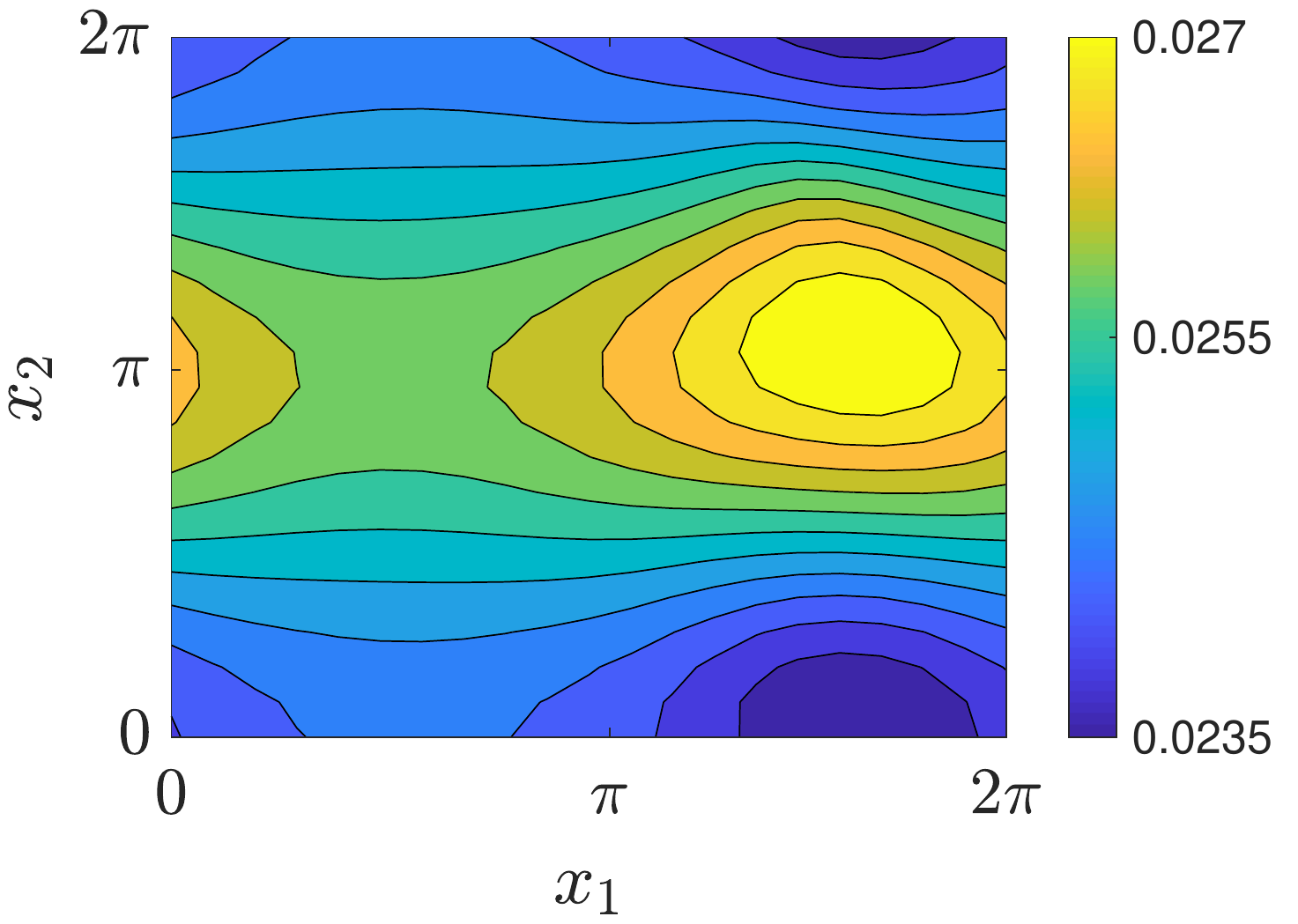}	
}
	
\hspace{0.1cm}	
\centerline{
\rotatebox{90}{\hspace{1.0cm}  \footnotesize  Full tensor product}
       \includegraphics[width=0.34\textwidth]{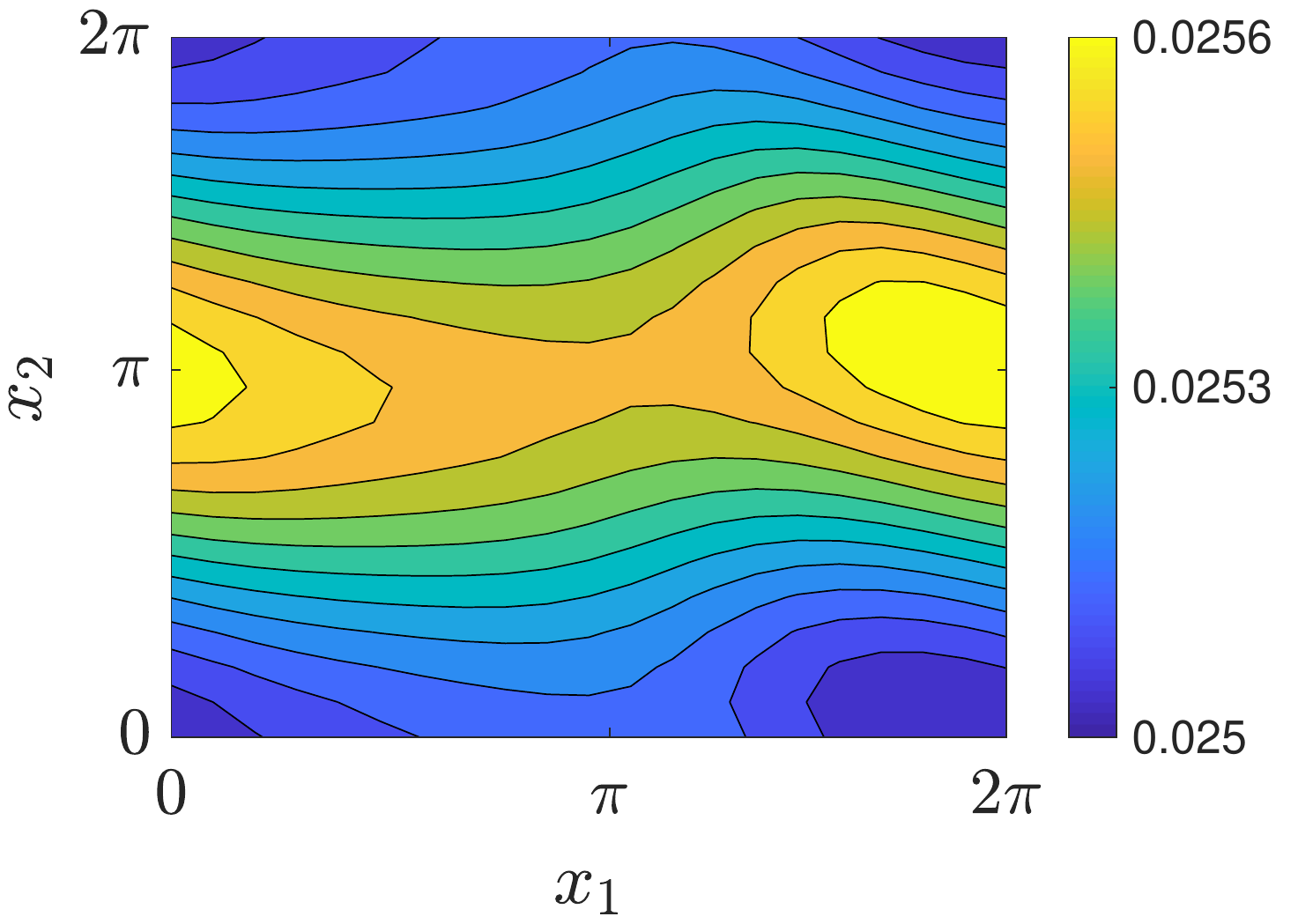}
	    \includegraphics[width=0.34\textwidth]{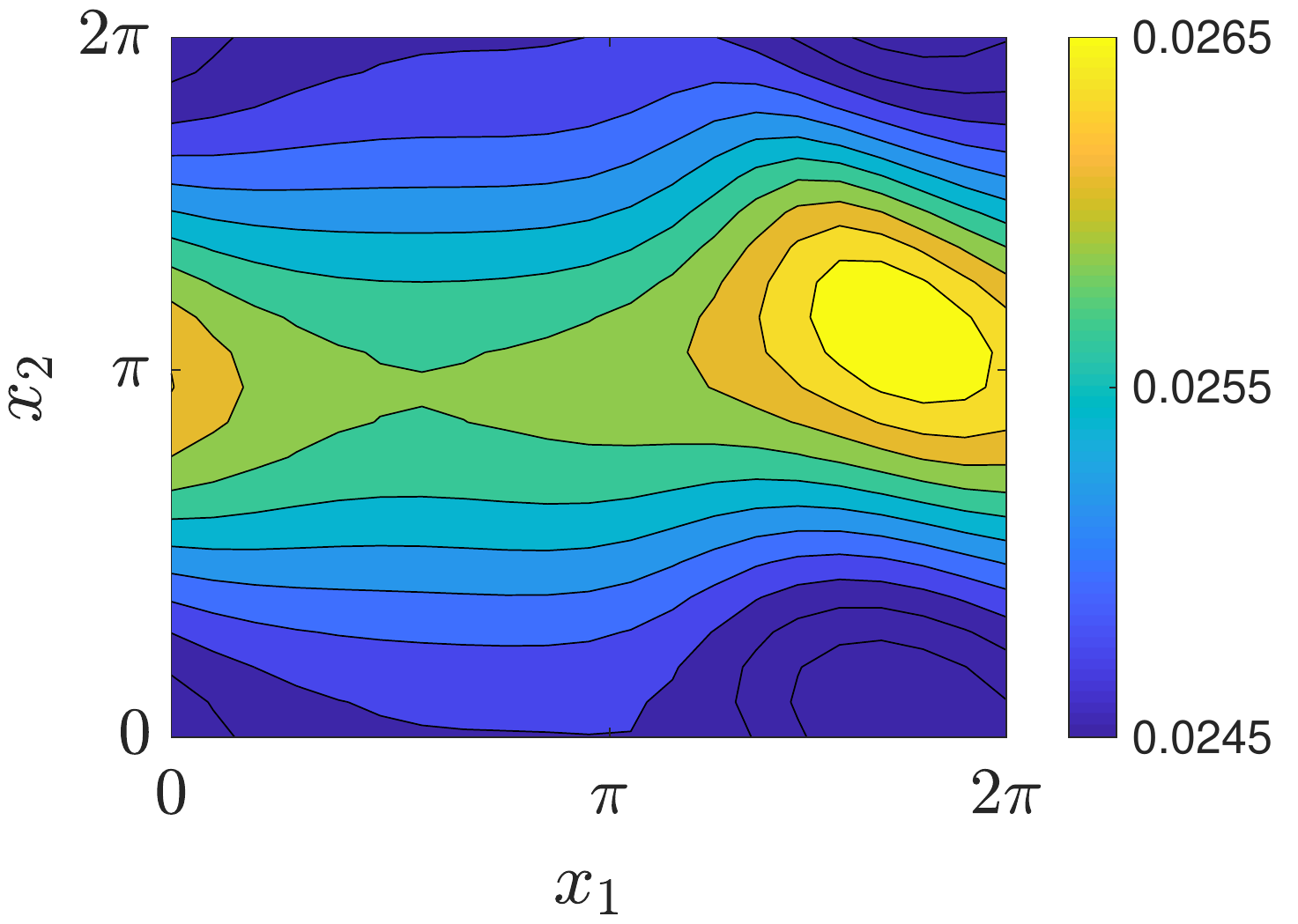}
	    \includegraphics[width=0.34\textwidth]{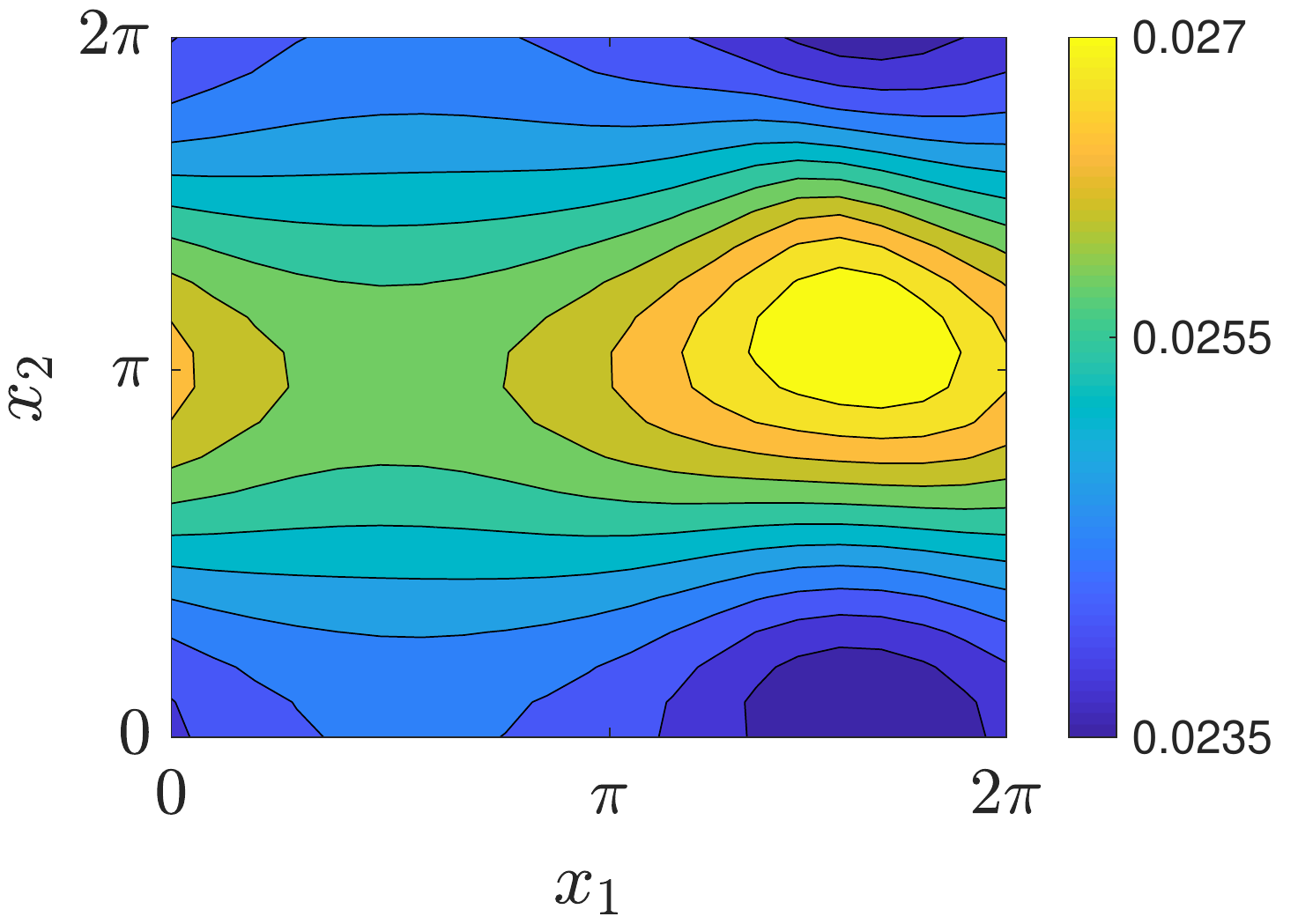}
}

\hspace{0.1cm}	
\centerline{
	\rotatebox{90}{\hspace{1.2cm}  \footnotesize Pointwise error}
       \includegraphics[width=0.34\textwidth]{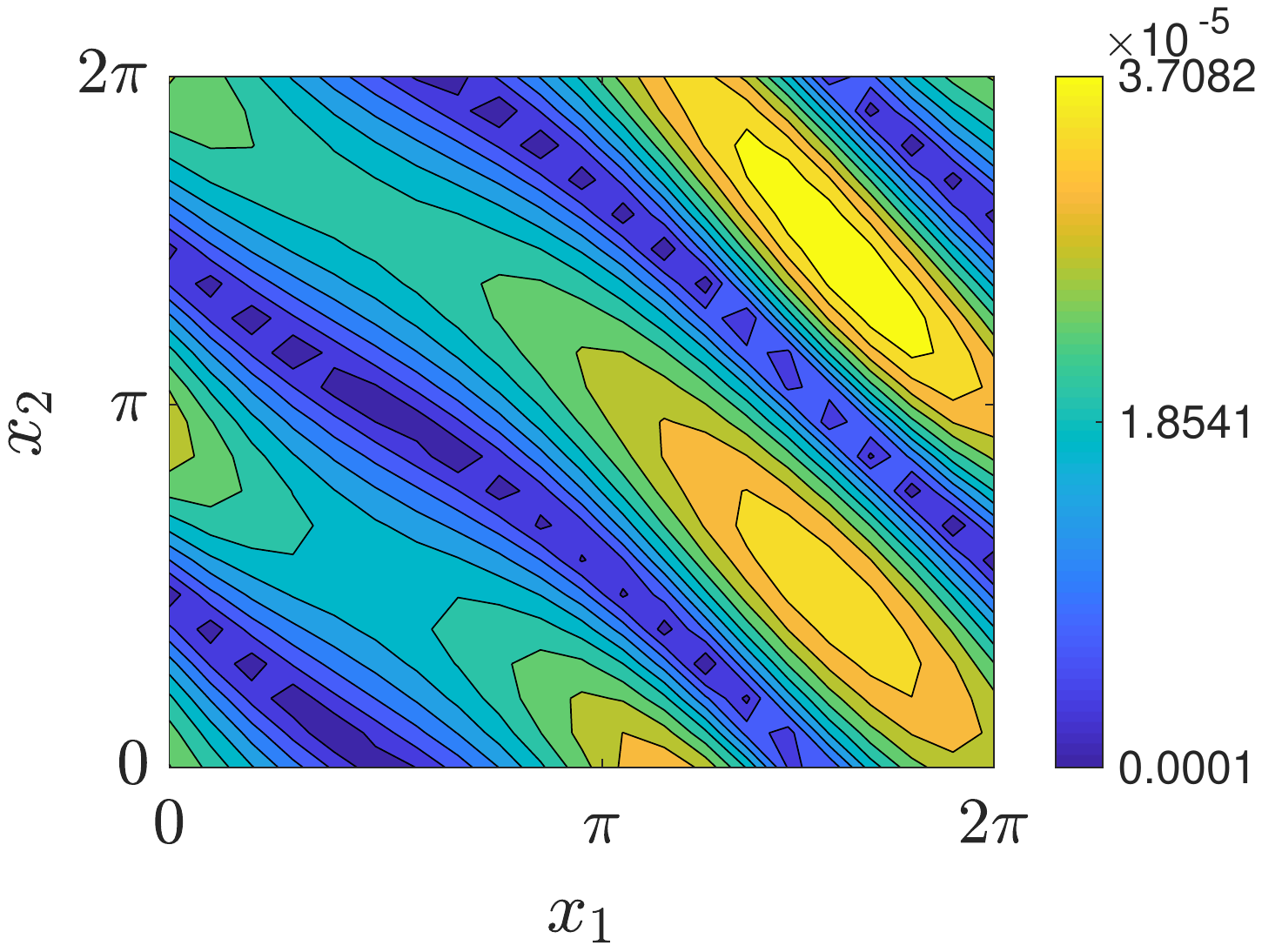}
	    \includegraphics[width=0.34\textwidth]{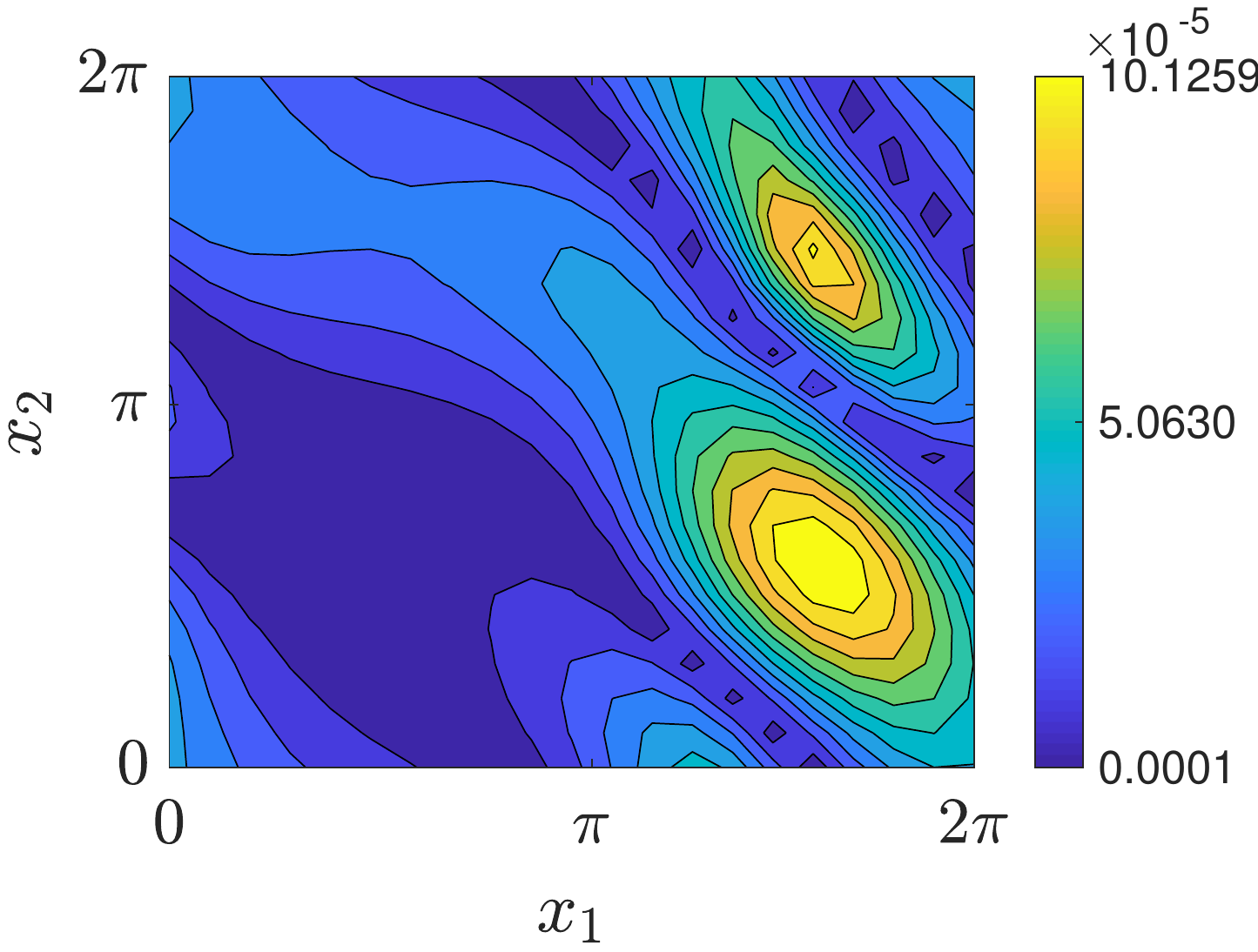}
	    \includegraphics[width=0.34\textwidth]{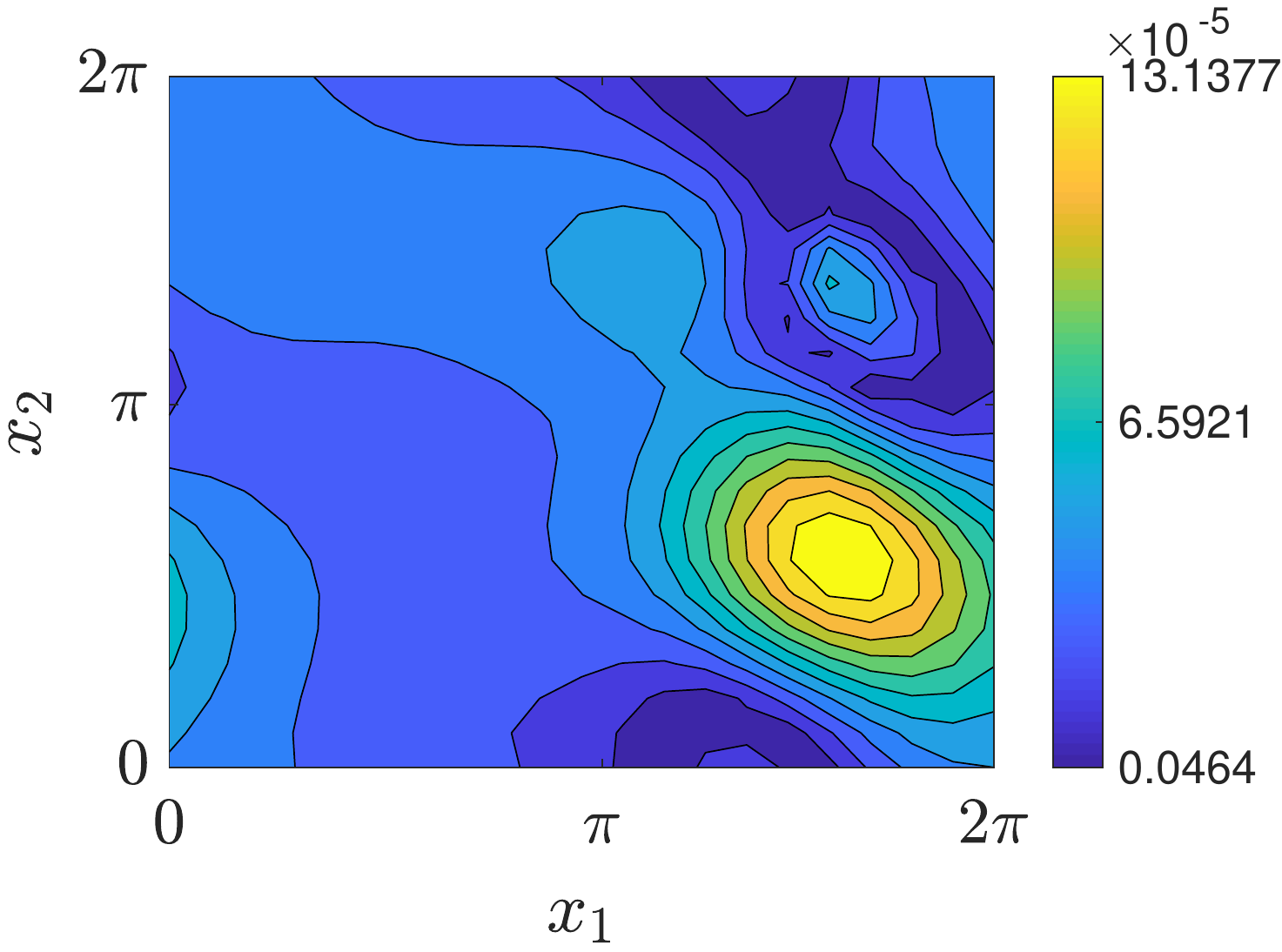}
}
\caption{Time snapshots of the marginal PDF $p(x_1,x_2,t)$ of the solution to \eqref{Fokker_Planck} computed with the DO-FTT propagator (top), on a full tensor product grid (middle) and and their pointwise error (bottom). The initial condition is obtained by decomposing \eqref{PDF_IC} as a TT-tensor with threshold 
$\epsilon = 10^{-8}$.}
\label{fig:FP_solution_time_evolution}
\end{figure}
\begin{figure}[t]
\centerline{\hspace{1.2cm}(a)\hspace{8.1cm}(b)}
	\centerline{
	\rotatebox{90}{\hspace{1.3cm} \footnotesize}
		\includegraphics[width=0.45\textwidth]{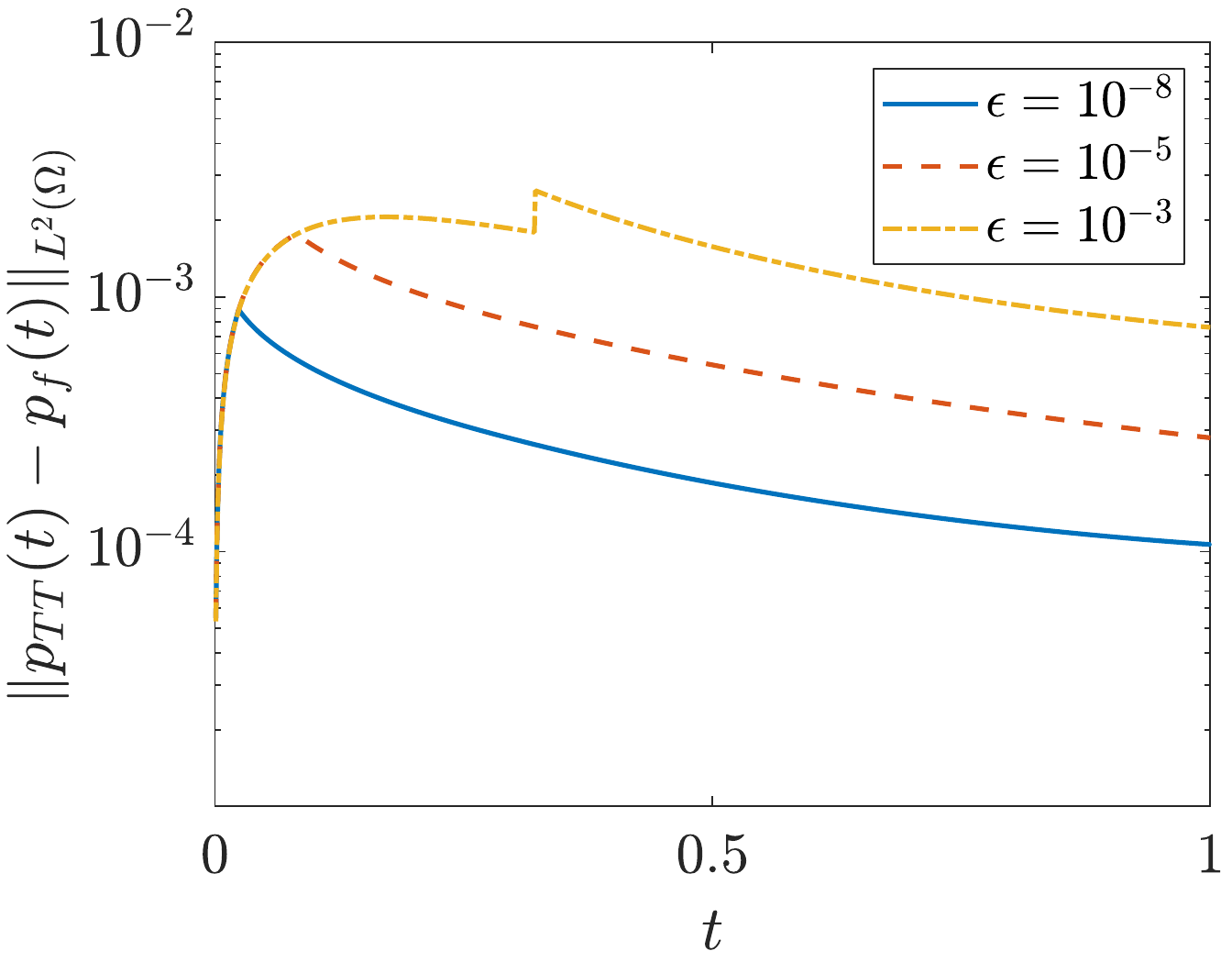}\hspace{1cm}
		\includegraphics[width=0.45\textwidth]{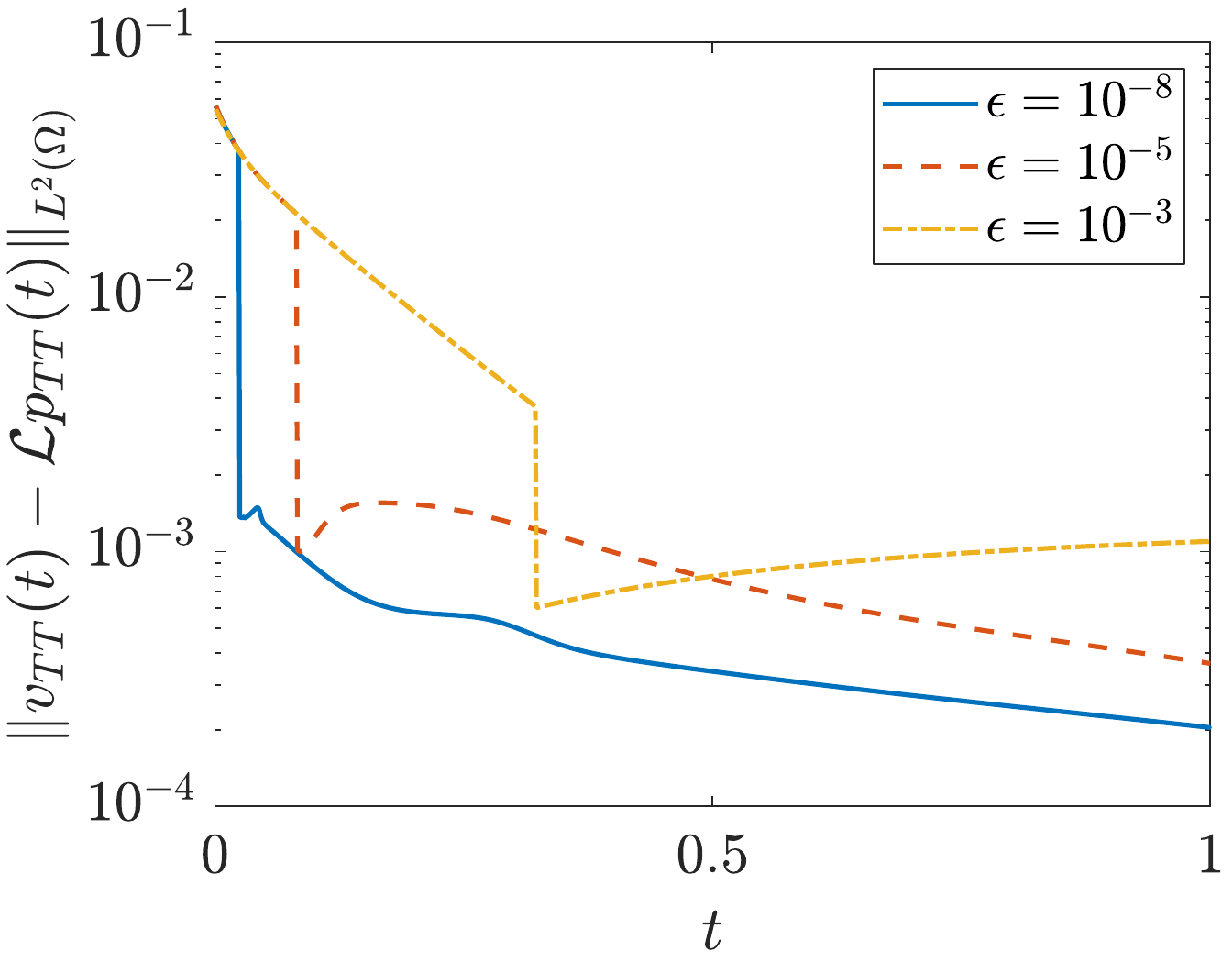}	
}
\caption{Error between the DO-FTT solution and the solution 
computed on a full tensor product grid to the Fokker--Planck 
equation \eqref{Fokker_Planck} (a). Error between the 
DO tangent vector to $\mathfrak{T}_{r}^{(4)}$ and 
$\mathcal{L} p_{\text{TT}}(t)$ (b). 
We plot results corresponding to FTT
decompositions of the initial condition with 
different thresholds $\epsilon$.}
\label{fig:error}
\end{figure}

\subsection{Fokker--Planck equation on the FTT tensor manifold} 
Next, we study the DO-FTT propagator \eqref{DO-FTT_system} 
for the Fokker--Planck equation \eqref{Fokker_Planck} 
with separable operator of the form \eqref{separable_operator}.
To write down such propagator explicitly, we 
adopt the convention that an operator applied 
to a matrix of functions acts on each entry of 
the matrix. With this convention, we obtain
the following evolution equations for the tensor cores
\begin{equation}
\label{FP_DO_FTT_system}
\begin{aligned}
\frac{\partial {\Psi}_1}{\partial t} =& \sum_{i=1}^{9} \left[ L_i^{(1)}\Psi_1 \left\langle 
L_i^{(2,3,4)} \Phi_1,\Phi_1^T \right\rangle_{2,3,4} - \Psi_1 \left\langle 
\Psi_1^T, L_i^{(1)} \Psi_1 \right\rangle_1 \left\langle L_i^{(2,3,4)} \Phi_1 , \Phi_1^T \right\rangle_{2,3,4} \right] C_{\Phi_1^T,\Phi_1^T}^{-1}, \\
\frac{\partial {\Psi}_2}{\partial t}  =& \sum_{i=1}^{9} \left[ \left\langle 
\Psi_1^T,L_i^{(1)} \Psi_1 \right\rangle_1 L_i^{(2)} \Psi_2 \left\langle L_i^{(3,4)} \Phi_2 , \Phi_2^T \right\rangle_{3,4} -\right.\\
&\left. \Psi_2 \left\langle 
\Psi_2^T \Psi_1^T,L_i^{(1)} \Psi_1 L_i^{(2)} \Psi_2 \right\rangle_{1,2} 
\left\langle L_i^{(3,4)} \Phi_2 , \Phi_2^T \right\rangle_{3,4} \right] C_{\Phi_2^T,\Phi_2^T}^{-1}, \\
\frac{\partial {\Psi}_3}{\partial t}  = &\sum_{i=1}^9 \biggr[ \left\langle \Psi_2^T \Psi_1^T, L_i^{(1)} \Psi_1 L_i^{(2)} \Psi_2 \right\rangle_{1,2} L_i^{(3)} \Psi_3 \left\langle L_i^{(4)} \Psi_4,\Psi_4 \right\rangle_4 \\
&- \Psi_3 \left\langle \Psi_3^T \Psi_2^T \Psi_1^T, L_i^{(1)} \Psi_1 L_i^{(2)} \Psi_2 L_i^{(3)} \Psi_3 \right\rangle_{1,2,3} \left\langle L_i^{(4)} \Psi_4, \Psi_4^T \right\rangle_4 \biggr]C_{\Psi_4^T,\Psi_4^T}^{-1} , \\
\frac{\partial {\Psi}_4}{\partial t}  = &\sum_{i=1}^{9} \left\langle \Psi_3^T \Psi_2^T \Psi_1^T, L_i^{(1)} \Psi_1 L_i^{(2)} \Psi_2 L_i^{(3)} \Psi_3 \right\rangle_{1,2,3} \Psi_4.
\end{aligned}
\end{equation}
This PDE system governs the dynamics of the solution to 
the Fokker--Planck equation \eqref{Fokker_Planck} with separable 
operator \eqref{separable_operator} on the FTT tensor 
manifold $\mathfrak{T}_r^{(4)}$. The rank $r$ can be adjusted 
adaptively in time \cite{Dektor_2020,robust_do/bo}, 
to guarantee a prescribed accuracy of the FTT solution.
\begin{figure}[t]
\centerline{\footnotesize\hspace{0.1cm} $\epsilon = 10^{-8}$ \hspace{4.2cm} $\epsilon = 10^{-5}$ \hspace{4.0cm} $\epsilon = 10^{-3}$}
	\centerline{
		\includegraphics[width=0.33\textwidth]{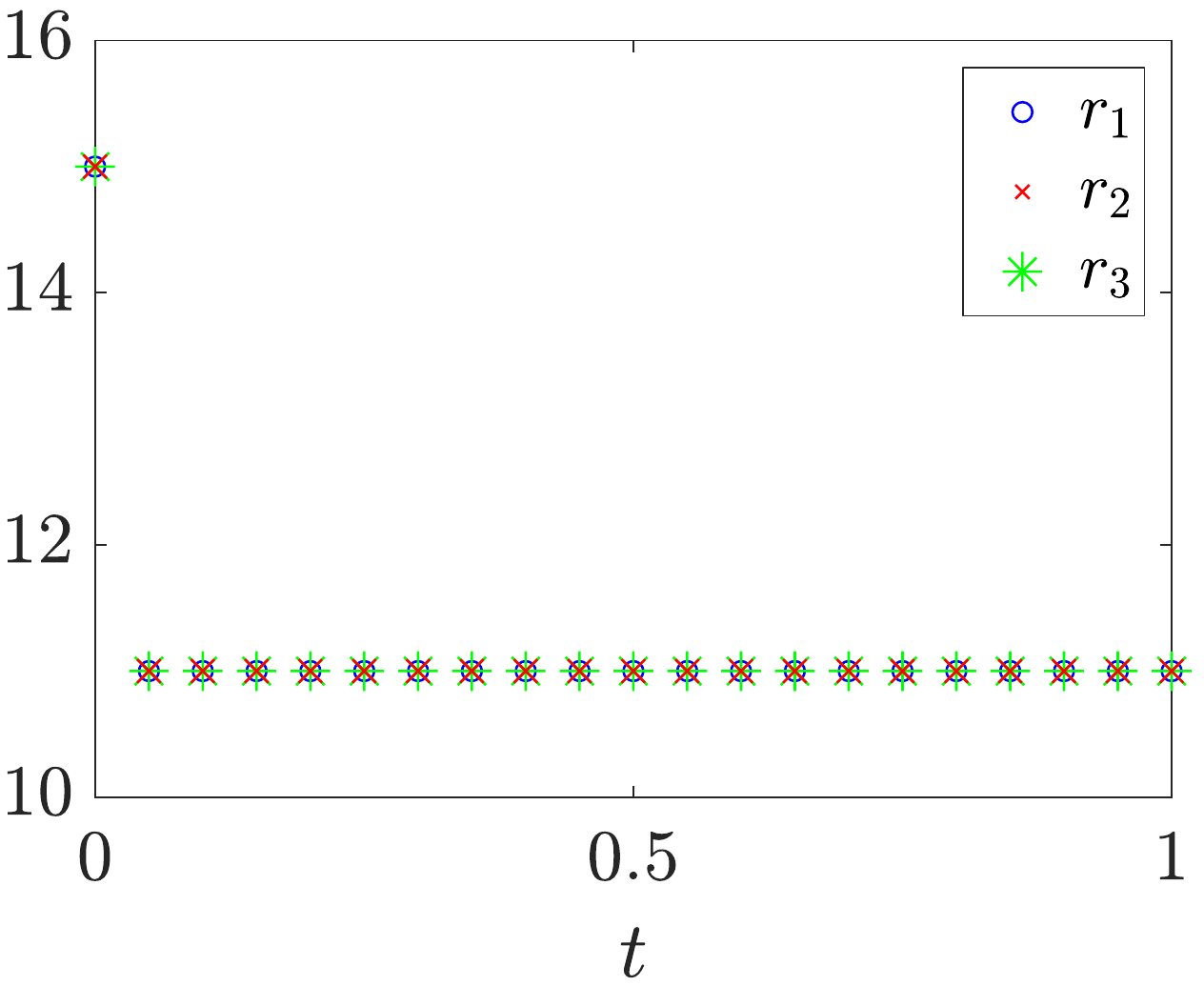}
		\includegraphics[width=0.33\textwidth]{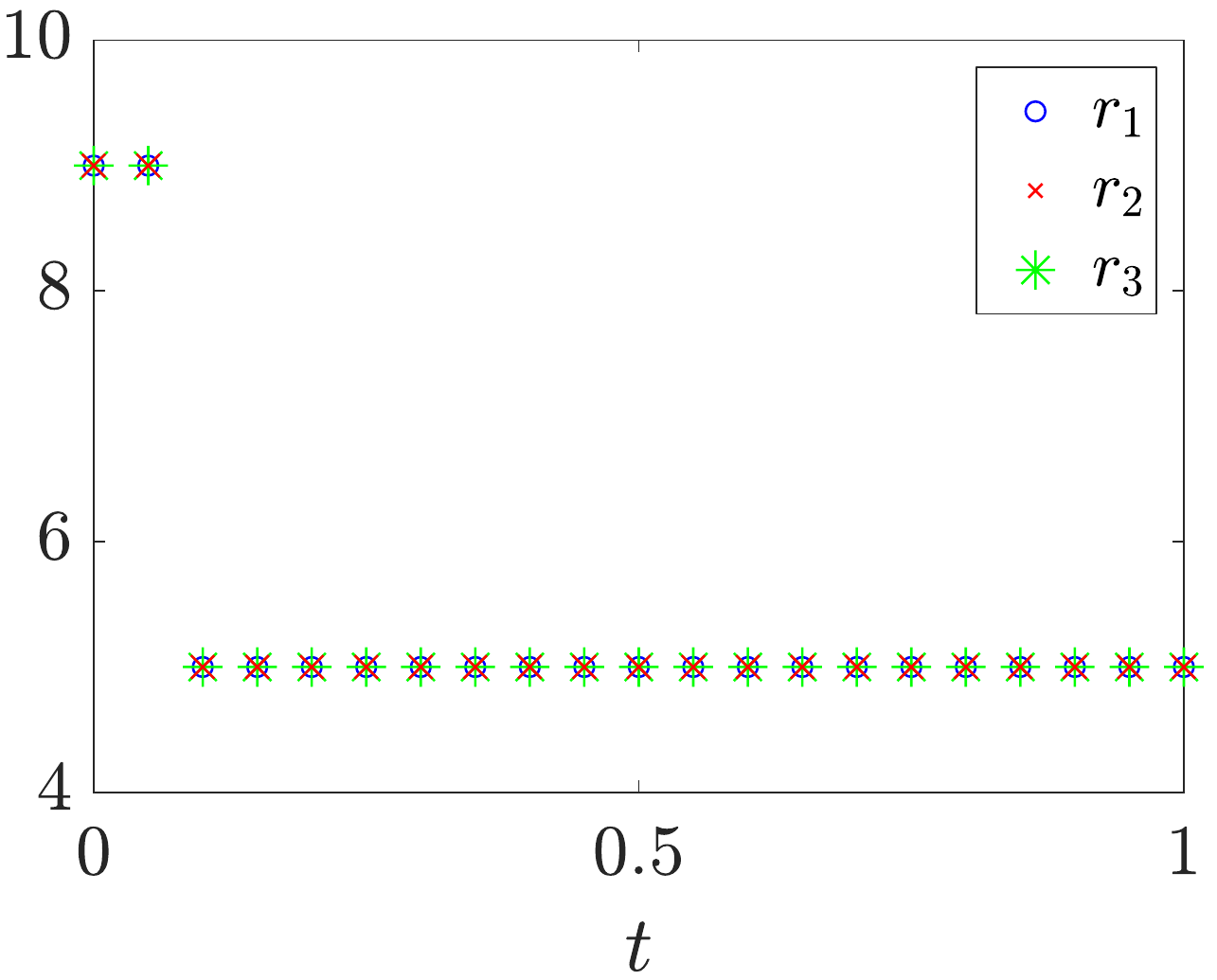}
		\includegraphics[width=0.33\textwidth]{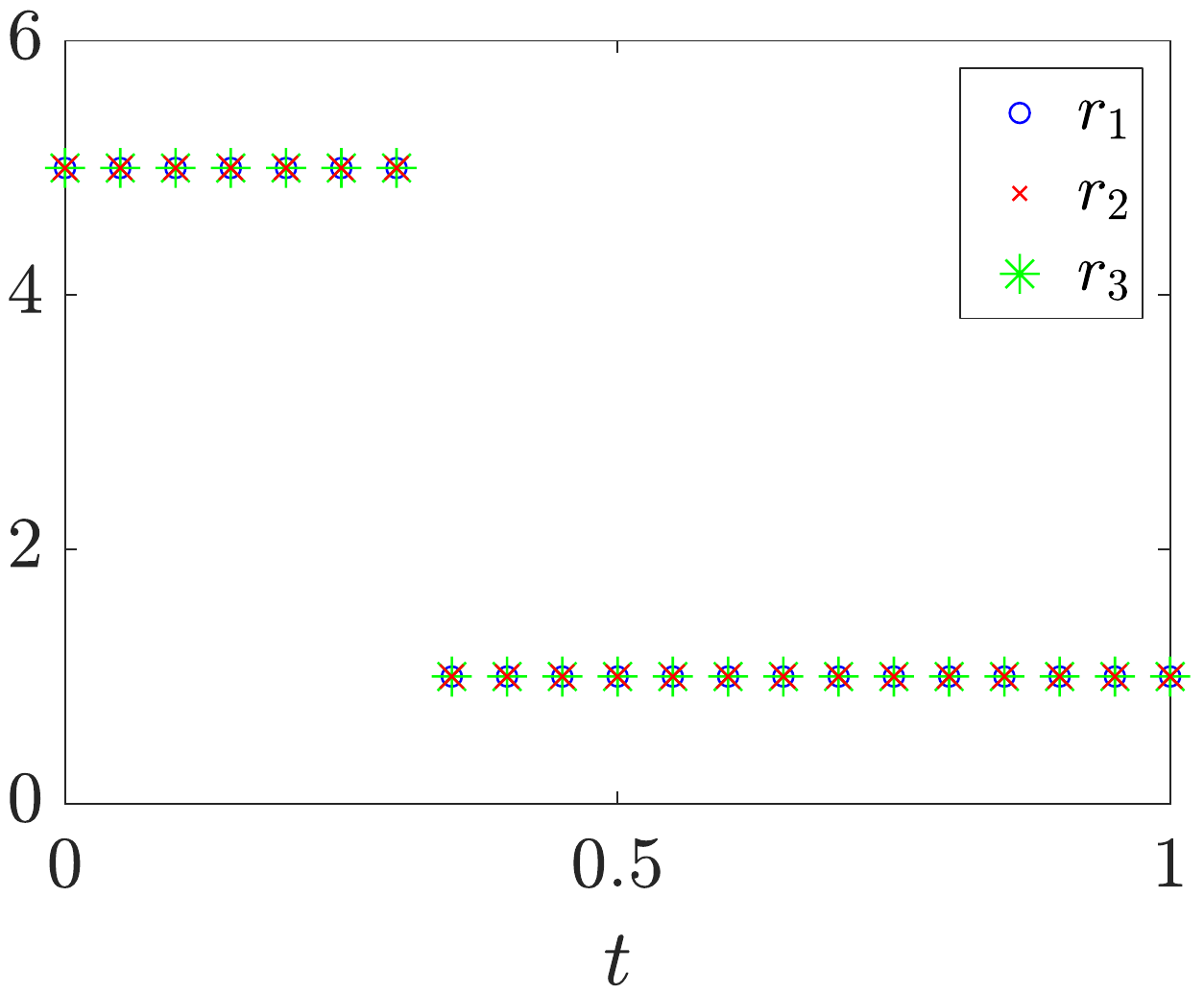}	
}
\caption{Time evolution of the DO-FTT solution rank for each simuation. Note that $r_0, r_4$ are excluded since they are always constantly equal to $1$.}
\label{fig:rank_evolution}
\end{figure}
The numerical solution to the PDE 
system \eqref{FP_DO_FTT_system} is 
computed with an explicit four--stages 
Runge-Kutta method with time 
step $\Delta t = 10^{-3}$, and a Fourier 
pseudo-spectral discretization \cite{spectral_methods_book}
on $21$ evenly--spaced collocation points in each 
spatial variable. 
In Figure \ref{fig:FP_solution_time_evolution} (top)
we plot a few temporal snapshots of the 
marginal PDF \eqref{marg_pdf} $p(x_1,x_2,t)$ we 
obtained using the DO-FTT temporal integrator.
The $L^2(\Omega)$ error between the benchmark solution 
and the DO-FTT solution is plotted in Figure \ref{fig:error}(a) 
for initial conditions decomposed with different thresholds 
$\epsilon$ (see Eqs. \eqref{FTT_IC}-\eqref{FTTranks}). 
With the velocity of each core $\partial \Psi_i/\partial t$ 
($i=1,2,3,4$) at time $t$ given by the DO-FTT system \eqref{FP_DO_FTT_system} we can construct the 
optimal tangent vector $v_{{TT}}(t)$ to 
the manifold $\mathfrak{T}^{(4)}_{r}$ 
at the point $p_{{TT}}(x,t)$ 
\begin{equation}
v_{{TT}}(t) = \frac{\partial \Psi_1}{\partial t} \Psi_2 \Psi_3 \Psi_4 + \Psi_1 \frac{\partial \Psi_2}{\partial t} \Psi_3 \Psi_4 + 
\Psi_1 \Psi_2 \frac{\partial \Psi_3}{\partial t} \Psi_4 + \Psi_1 \Psi_2 \Psi_3 \frac{\partial \Psi_4}{\partial t}.
\end{equation}
In Figure \ref{fig:error}(b) we plot the $L_{\mu}^2(\Omega)$ 
norm of $v_{{TT}}(t) - \mathcal{L} p_{{TT}}(t)$ 
at each time $t$. 

Note that the norm 
of $v_{{TT}}(t) - \mathcal{L} p_{{TT}}(t)$ 
is the norm of the normal component 
of $\mathcal{L} p_{{TT}}(t)$ at the point 
$p_{TT}(t)$ with respect to the manifold 
$\mathfrak{T}_r^{(4)}$ (see Eq. \eqref{tangent_and_normal}).
Such a norm measures the deviation 
between the temporal derivative of the DO-FTT solution 
$p_{TT}(x,t)$ and the temporal derivative defined by $\mathcal{L}p_{TT}(x,t)$ (right hand side of the Fokker--Planck equation). This provides an indication of whether the vector in the tangent plane 
of $\mathfrak{T}_r^{(4)}$ at $p_{TT}(x,t)$ is pointing in the 
right direction, and if the rank $r=(r_1,r_2,r_3,r_4)$ is sufficient
to resolve the dynamics. Note that the rank is 
initially set by $\epsilon$ (see Eq. \eqref{FTTranks}).
As $p_{{TT}}(x,t)$ propagates forward in time the 
energy of FTT modes (tensor cores) decays due 
to the diffusion term in the Fokker--Planck equation. 
If no action is taken to reduce solution rank,
low energy modes will lead to ill-conditioned 
(possibly singular) matrices 
$C_{\Phi_j^T,\Phi_j^T}$ resulting in instabilities 
of the DO-FTT propagator \eqref{FP_DO_FTT_system}. 
To ensure this does not happen 
the energy of each FTT mode is tracked and if the 
energy of one mode falls below the threshold $\epsilon$ 
then the FTT decomposition is recomputed with 
threshold $\epsilon$. For each of the three simulations 
we run, $\epsilon$ is kept constant throughout the 
integrating period $t\in [0,1]$ and set 
at $\epsilon = \{10^{-8},10^{-5},10^{-3}\}$. 
In Figure \ref{fig:rank_evolution} we plot the time 
evolution of the solution ranks we obtained 
for each of the three simulations.

\section{Summary}
\label{sec:summary}

We developed a new method based on functional 
tensor decomposition and dynamic 
tensor approximation to compute the solution of 
high-dimensional time-dependent nonlinear 
PDEs in real separable Hilbert spaces. 
The method is built upon the 
functional tensor train (FTT) expansion 
proposed by Bigoni {\em et al.} in \cite{Bigoni_2016}, 
combined with dynamic tensor approximation. 
This yields an infinite-dimensional 
analogue of the dynamic low--rank approximation on Euclidean 
manifolds studied by Lubich {\em et al.} for 
matrices \cite{Lubich_2007,Lubich_2008}, and for
hierarchical tensors \cite{Lubich_2013,Lubich_2015}. 
The idea of dynamic approximation is to project the 
time derivative of the PDE solution onto the 
tangent space of a low-rank functional tensor manifold 
at each time. Using the set of hierarchical 
dynamic orthogonality constraints we recently 
introduced in \cite{Dektor_2020} we computed the 
projection needed for dynamic approximation 
by minimizing a convex energy functional over the 
tangent space. The unique optimal velocity vector 
obtained in this way allows us to integrate the PDE forward in 
time on a tensor manifold of constant rank. 
In the case of initial/boundary value problems defined 
in separable geometries, this procedure yields evolution 
equations for the tensor modes in the form of a coupled 
system of one-dimensional time-dependent PDEs. 
We applied the proposed tensor method 
to a four-dimensional Fokker--Planck equation 
with non-constant drift and diffusion coefficients, 
and demonstrated its accuracy in predicting 
relaxation to statistical equilibrium.

\vspace{0.5cm}
\noindent 
{\bf Acknowledgements} 
This research was supported by the U.S. Army 
Research Office grant W911NF1810309, 
and by the U.S. Air Force Office of Scientific 
Research grant  FA9550-20-1-0174.

\appendix
\section{Proof of Theorem \ref{thm:evolution_equations}}
\label{sec:minimizing_lagrangian}

The functional $\mathcal{A}$ in 
\eqref{constrained_min_problem_lagrangian} 
is convex and thus a critical point is necessarily 
a global minimum. To find such a critical point set 
the first variation of $\mathcal{A}$ 
with respect to $\partial \psi_j(\xi_{j-1},\xi_j)/\partial s$ 
in the direction 
$\eta_j(\xi_{j-1},\xi_j)$ ($j = 1,2,\ldots,d$, $\xi_j = 1,2,\ldots,r_j$) 
\begin{equation}
\left[\frac{d}{d \epsilon} \mathcal{A}\left( \frac{\partial \psi_j(\xi_{j-1},\xi_j)}{\partial s} + \epsilon \eta_j(\xi_{j-1},\xi_j) \right) \right]_{\epsilon = 0}
\end{equation}
equal to zero for all $\eta_j(\xi_{j-1},\xi) \in L^2_{\mu_j}(\Omega_j)$.
Note that we have available the dynamic constraints 
\begin{equation}
\left\langle \frac{\partial \psi_j(\cdot,\alpha_j)}{\partial s}, \psi_j(\cdot,\beta_j) \right\rangle_{L^2_{\tau \times \mu_j}(\mathbb{N} \times \Omega_j)} = 0 , \qquad \forall j = 1,\ldots,d-1, \quad \alpha_j,\beta_j = 1,\ldots,r_j,
\end{equation}
and the static constraints 
\begin{equation}
\left\langle \psi_j(\cdot,\alpha_j), \psi_j(\cdot,\beta_j) \right\rangle_{L^2_{\tau \times \mu_j}(\mathbb{N} \times \Omega_j)} = \delta_{\alpha_j,\beta_j} , \qquad \forall j = 1,\ldots,d-1, \quad \alpha_j,\beta_j = 1,\ldots,r_j,
\end{equation}
which are implied by the dynamic constraints as long as the cores $\Psi_1(s),\ldots,\Psi_{d-1}(s)$ all have identity auto-correlation matrices at some time (say at $s=0$).
For $j = 1$ we obtain 
\begin{equation}
\begin{aligned}
&\left[\delta_{\frac{\partial {\psi}_1(\xi_{0},\xi_1)}{\partial s}} \mathcal{A}\right] \eta_1 (\xi_{0} , \xi_1) 
\\
&= 2\biggl\langle  \frac{\partial }{\partial s} \left[ \sum_{\alpha_0,\alpha_1=1}^{r_0,r_1} \psi_1(\alpha_0,\alpha_1) \varphi_1(\alpha_1) \right] - N(u) , 
\eta_1(\xi_0,\xi_1) \varphi_1(\xi_1) \biggr\rangle_{1,2,\ldots,d} \\
&+ \sum_{\alpha_1=1}^{r_j} \lambda_{\xi_1 \alpha_1}^{(1)} \bigg\langle
\eta_1(\xi_{0},\xi_1), \psi_1(\xi_{0},\alpha_1) \bigg\rangle_1 \\
&= 0, 
\end{aligned}
\end{equation}
whence the fundamental lemma of calculus of variations implies 
\begin{equation}
\begin{aligned}
2\biggl\langle  \frac{\partial }{\partial s} \left[ \sum_{\alpha_0,\alpha_1=1}^{r_0,r_1} \psi_1(\alpha_0,\alpha_1) \varphi_1(\alpha_1) \right] - N(u) , 
 \varphi_1(\xi_1) \biggr\rangle_{2,\ldots,d} 
+ \sum_{\alpha_1=1}^{r_j} \lambda_{\xi_1 \alpha_1}^{(1)} 
 \psi_1(\xi_{0},\alpha_1) 
= 0.
\end{aligned}
\end{equation}
Rearranging terms we obtain 
\begin{equation}
\label{psi_1_derivation}
\begin{aligned}
&\sum_{\alpha_0,\alpha_1=1}^{r_0,r_1} \left( \frac{\partial \psi_1(\alpha_0,\alpha_1)}{\partial s} \left\langle \varphi_1(\alpha_1),\varphi_1(\xi_1) \right\rangle_{2,\ldots,d} + \psi_1(\alpha_0,\alpha_1) \left\langle \frac{\partial \varphi_1(\alpha_1)}{\partial s}, \varphi_1(\xi_1) \right\rangle_{2,\ldots,d} \right) \\
&= \left\langle N(u),\varphi_1(\xi_1) \right\rangle_{2,\ldots,d} - \frac{1}{2} \sum_{\alpha_1=1}^{r_j} \lambda_{\xi_1 \alpha_1}^{(1)} 
 \psi_1(\xi_{0},\alpha_1) .
\end{aligned}
\end{equation}
Taking $\langle \cdot, \psi_1(\alpha_{0},\xi'_1) \rangle_{L^2_{\tau \times \mu_1}(\mathbb{N} \times \Omega_1)}$ of the previous equation and utilizing the dynamic 
and static constraints, 
we solve for the Lagrange multiplier
\begin{equation}
\label{multiplier_1}
\begin{aligned}
\lambda_{\xi_1 \xi'_1}^{(1)} = &\bigg\langle N(u), 
\psi_1(1,\xi'_1) \varphi_1(\xi_1) \bigg\rangle_{1,\ldots,d} - \bigg\langle \frac{\partial {\varphi}_1(\xi'_1)}{\partial s}, \varphi_1(\xi_1) \bigg\rangle_{2,\ldots,d}.
\end{aligned}
\end{equation}
Substituting \eqref{multiplier_1} into \eqref{psi_1_derivation} 
and rearranging terms we obtain
\begin{equation}
\begin{aligned}
&\sum_{\alpha_1=1}^{r_1} \frac{\partial \psi_1(1,\alpha_1)}{\partial s} 
\left\langle \varphi_1(\alpha_1),\varphi_1(\xi_1) \right\rangle_{2,\ldots,d} \\&= 
\left\langle N(u),\varphi_1(\xi_1) \right\rangle_{2,\ldots,d} 
- \sum_{\alpha_1=1}^{r_1} \psi_1(1,\alpha_1) \left\langle N(u), \psi_1(1,\alpha_1) \varphi_1(\xi_1) \right\rangle_{1,\ldots,d}
\end{aligned}
\end{equation}
Using the matrix--vector notation for tensor cores and 
inverting the auto-correlation matrix on the left hand side yields 
the equation for $\partial \Psi_1 / \partial s$ in \eqref{DO-FTT_system}.
For $j = 2, \ldots, d-1$ we have that 
\begin{equation}
\begin{aligned}
\left[\delta_{\frac{\partial {\psi}_j(\xi_{j-1},\xi_j)}{\partial s}} \mathcal{A}\right] \eta_j (\xi_{j-1} , \xi_j) &= 2
\biggl\langle  \frac{\partial }{\partial s} \left[ \sum_{\alpha_0,\ldots,\alpha_j=1}^{r_0,\ldots,r_j} \psi_1(\alpha_0,\alpha_1) \cdots 
\psi_j(\alpha_{j-1},\alpha_j) \varphi_j(\alpha_j) \right] - N(u) ,  \\
& \sum_{\alpha_0,\ldots, \alpha_{j-2}=1}^{r_0,\ldots,r_{j-2}} 
\psi_1(\alpha_0,\alpha_1) \cdots \psi_{j-1}(\alpha_{j-2},\xi_{j-1}) 
\eta_j(\xi_{j-1},\xi_j) \varphi_j(\xi_j) \biggr\rangle_{1,2,\ldots,d} \\
&+ \sum_{\alpha_j=1}^{r_j} \lambda_{\xi_j \alpha_j}^{(j)} \bigg\langle
\eta_j(\xi_{j-1},\xi_j), \psi_j(\xi_{j-1},\alpha_j) \bigg\rangle_j.
\end{aligned}
\end{equation}
Moreover, utilizing the fundamental lemma of 
calculus of variations and 
rearranging terms we obtain 
\begin{equation}
\begin{aligned}
&\bigg\langle \sum_{\alpha_0,\ldots,\alpha_j=1}^{r_0,\ldots,r_j} \frac{\partial }{\partial s} \bigg[ \psi_1(\alpha_0,\alpha_1) \cdots \psi_j(\alpha_{j-1},\alpha_j) \varphi_j(\alpha_j)\bigg] , \\ 
&\sum_{\alpha_0,\ldots,\alpha_{j-2} = 1 }^{r_0,\ldots,r_{j-2}} \psi_1(\alpha_0,\alpha_1) \cdots \psi_{j-1}(\alpha_{j-2},\xi_{j-1}) \varphi_j(\xi_j) \bigg\rangle_{1,\ldots,j-1,j+1,\ldots,d} \\
&= \bigg\langle N(u) , \sum_{\alpha_0,\ldots,\alpha_{j-2} = 1 }^{r_0,\ldots,r_{j-2}} \psi_1(\alpha_0,\alpha_1) \cdots \psi_{j-1}(\alpha_{j-2},\xi_{j-1}) \varphi_j(\xi_j) \bigg\rangle_{1,\ldots,j-1,j+1,\ldots,d} \\ &
- \frac{1}{2}\sum_{\alpha_j} \lambda_{\xi_j \alpha_j}^{(j)} \psi_j(\xi_{j-1},\alpha_j).
\end{aligned}
\label{A2}
\end{equation}
Utilizing the dynamic orthogonality condition \eqref{DO_constraints_multilevel} and the 
orthonormality for all $t$ on the left hand side of \eqref{A2}
we obtain 
\begin{equation}
\label{DO_derivation_1}
\begin{aligned}
&\sum_{\alpha_j} \left( \frac{{\psi}_j(\xi_{j-1},\alpha_j)}{\partial s} \bigg\langle \varphi_j(\alpha_j), \varphi_j(\xi_j) \bigg\rangle_{j+1,\ldots,d} + \psi_j(\xi_{j-1},\alpha_j) \bigg\langle \frac{\partial {\varphi}_j(\alpha_j)}{\partial s}, \varphi_j(\xi_j) \bigg\rangle_{j+1,\ldots,d}  \right) \\
= &\bigg\langle N(u_0), \sum_{\alpha_0,\ldots,\alpha_{j-2}=1 }^{r_0,\ldots,r_{j-2}} \psi_1(\alpha_0,\alpha_1) \cdots \psi_{j-1}(\alpha_{j-1},\xi_{j-1}) \varphi_j(\xi_j) \bigg\rangle_{1,\ldots,j-1,j+1,\ldots,d} \\
& - \frac{1}{2} \sum_{\alpha_j} \lambda_{\xi_j \alpha_j}^{(j)} \psi_j(\xi_{j-1},\alpha_j).
\end{aligned}
\end{equation}
Taking $\langle \cdot, \psi_j(\alpha_{j-1},\xi'_j) \rangle_{L^2_{\tau \times \mu_j}(\mathbb{N} \times \Omega_j)}$ of the previous equation and utilizing the constraints we find
\begin{equation}
\label{multiplier}
\begin{aligned}
\lambda_{\xi_j \xi'_j}^{(j)} = 2\biggr[ &\sum_{\alpha_0,\ldots,\alpha_{j-1} =1 }^{r_0,\ldots,r_{j-1}} \bigg\langle N(u), 
\psi_1(\alpha_0,\alpha_1) \cdots \psi_{j-1}(\alpha_{j-2},\xi_{j-1}) \psi_j(\alpha_{j-1}, \xi'_j) \varphi_j(\xi_j) \bigg\rangle_{1,\ldots,d} \\
&- \bigg\langle \frac{\partial {\varphi}_j(\xi'_j)}{\partial s}, \varphi_j(\xi_j) \bigg\rangle_{j+1,\ldots,d} \biggr]
\end{aligned}
\end{equation}
Plugging \eqref{multiplier} into \eqref{DO_derivation_1} 
and simplifying we obtain
\begin{equation}
\begin{aligned}
&\sum_{\alpha_j=1}^{r_j} \frac{\partial {\psi}_j(\xi_{j-1},\alpha_j) }{\partial s}\bigg\langle \varphi_j(\alpha_j),\varphi_j(\xi_j) \bigg\rangle_{j+1,\ldots,d} \\
=&  \sum_{\alpha_0,\ldots,\alpha_{j-2}=1}^{r_0,\ldots,r_{j-2}} \bigg\langle 
N(u), \psi_1(\alpha_0,\alpha_1) \cdots \psi_{j-1}(\alpha_{j-2},\xi_{j-1}) \varphi_j(\xi_j) \bigg\rangle_{1,\ldots,j-1,j+1,\ldots,d} \\
&- \sum_{\alpha_0,\ldots,\alpha_j=1}^{r_0,\ldots,r_j} \psi_j(\xi_{j-1},\alpha_j) \bigg\langle N(u),\psi_1(\alpha_0,\alpha_1) \cdots \psi_{j-1}(\alpha_{j-2},\xi_{j-1}) \psi_j(\alpha_{j-1},\alpha_j) \varphi_j(\xi_j) \bigg\rangle_{1,\ldots,d}.
\end{aligned}
\end{equation}
Using the matrix vector notation for tensor cores and 
inverting the auto-correlation matrix on the left hand side yields 
the equation for $\partial \Psi_j / \partial s$ in \eqref{DO-FTT_system}.
For $j = d$ we obtain 
\begin{equation}
\begin{aligned}
&\left[\delta_{\frac{\partial {\psi}_d(\xi_{0},\xi_1)}{\partial s}} \mathcal{A}\right] \eta_d (\xi_{d-1} , \xi_d) 
\\
&= 2\biggl\langle  \frac{\partial }{\partial s} \left[ \sum_{\alpha_0,\ldots,\alpha_d=1}^{r} \psi_1(\alpha_0,\alpha_1) \cdots \psi_d(\alpha_{d-1},\alpha_d) \right] - N(u) , \\
&\sum_{\alpha_0,\ldots,\alpha_{d-2}=1}^{r_0,\ldots,r_{d-1}} \psi_1(\alpha_0,\alpha_1)\cdots \psi_{d-1}(\alpha_{d-2},\xi_{d-1}) \eta_d(\xi_{d-1},\xi_d) \biggr\rangle_{1,2,\ldots,d} \\
&= 0, \qquad \forall \eta_d(\xi_{d-1},\xi_d) \in L^2_{\mu_d}(\Omega_1),
\end{aligned}
\end{equation}
whence the fundamental lemma of calculus of variations implies 
\begin{equation}
\begin{aligned}
&\biggl\langle  \frac{\partial }{\partial s} \left[ \sum_{\alpha_0,\ldots,\alpha_d=1}^{r} \psi_1(\alpha_0,\alpha_1) \cdots \psi_d(\alpha_{d-1},\alpha_d) \right] - N(u) , \\
&\sum_{\alpha_0,\ldots,\alpha_{d-2}=1}^{r_0,\ldots,r_{d-1}} \psi_1(\alpha_0,\alpha_1)\cdots \psi_{d-1}(\alpha_{d-2},\xi_{d-1}) \biggr\rangle_{1,\ldots,d-1}= 0.
\end{aligned}
\end{equation}
Rearranging terms we obtain 
\begin{equation}
\begin{aligned}
&\left\langle \frac{\partial }{\partial s} \left[ \sum_{\alpha_0,\ldots,\alpha_d=1}^{r} \psi_1(\alpha_0,\alpha_1) \cdots \psi_d(\alpha_{d-1},\alpha_d) \right], 
\sum_{\alpha_0,\ldots,\alpha_{d-2}=1}^{r_0,\ldots,r_{d-1}} \psi_1(\alpha_0,\alpha_1)\cdots \psi_{d-1}(\alpha_{d-2},\xi_{d-1}) \right\rangle_{1,\ldots,d-1} \\
&=\left\langle N(u), \sum_{\alpha_0,\ldots,\alpha_{d-2}=1}^{r_0,\ldots,r_{d-1}} \psi_1(\alpha_0,\alpha_1)\cdots \psi_{d-1}(\alpha_{d-2},\xi_{d-1}) \right\rangle_{1,\ldots,d-1}.
\end{aligned}
\end{equation}
Using the dynamic and static orthogonality constraints 
we obtain 
\begin{equation}
\begin{aligned}
\frac{\partial \psi_d(\xi_{d-1},1)}{\partial s} = \left\langle N(u), \sum_{\alpha_0,\ldots,\alpha_{d-2}=1}^{r_0,\ldots,r_{d-1}} \psi_1(\alpha_0,\alpha_1)\cdots \psi_{d-1}(\alpha_{d-2},\xi_{d-1}) \right\rangle_{1,\ldots,d-1}.
\end{aligned}
\end{equation}
Writing this expression in matrix-vector notation the desired equation for 
$\partial \Psi_d / \partial s$ is obtained.

\bibliographystyle{plain}
\bibliography{bibliography_file}

\end{document}